\def\Q{{{\Bbb Q}}}
\def\N{{{\Bbb N}}}
\def\Z{{{\Bbb Z}}}
\def\T{{{\Bbb T}}}
\def\R{{\Bbb R}}
\def\C{{\Bbb C}}
\def\a{{\alpha }}
\def\D{{\Delta }}
\def\a{{\alpha}}
\def\b{{\beta}}
\def\e{{\varepsilon}}
\def\vp{{\varphi}}
\def\g{{\gamma }}
\def\L{{\Lambda }}
\def\La{{\Lambda }}
\def\){\right)}
\def\({\left(}
\numberwithin{equation}{section}
\newtheorem{corollary}{Corollary}[section]
\newtheorem{lemma}{Lemma}[section]
\newtheorem{theorem}{Theorem}[section]
\newtheorem{remark}{Remark}[section]
\newtheorem*{thma}{Theorem A}
\begin{document}

\title{On the growth of Lebesgue constants for convex polyhedra}

\author[Yurii
Kolomoitsev]{Yurii
Kolomoitsev$^{\text{a, b, *, 1}}$}
\address{Institute of Applied Mathematics and Mechanics of NAS of Ukraine,
General Batyuk Str.~19, Slov’yans’k, Donetsk region, Ukraine, 84100}
\email{kolomus1@mail.ru}

\author[Tetiana
Lomako]{Tetiana
Lomako$^{\text{a, b, 1}}$}
\address{Institute of Applied Mathematics and Mechanics of NAS of Ukraine,
General Batyuk Str.~19, Slov’yans’k, Donetsk region, Ukraine, 84100}
\email{tlomako@yandex.ru}

\thanks{$^\text{a}$Universit\"at zu L\"ubeck,
Institut f\"ur Mathematik,
Ratzeburger Allee 160,
23562 L\"ubeck, Germany}

\thanks{$^\text{b}$Institute of Applied Mathematics and Mechanics of NAS of Ukraine,
General Batyuk Str. 19, Slov’yans’k, Donetsk region, Ukraine, 84100}

\thanks{$^1$Supported by   H2020-MSCA-RISE-2014 Project number 645672  (AMMODIT: "\!Approximation Methods for Molecular Modelling and Diagnosis Tools").}

\thanks{$^*$Corresponding author}

%\thanks{E-mail address: kolomus1@mail.ru}

\date{\today}
\subjclass[2010]{42B05, 42B15, 42B08} \keywords{Lebesgue constants, Dirichlet kernel, convex polyhedra}

\begin{abstract}
In the paper, new  estimates of the Lebesgue constant
$$
\mathcal{L}(W)=\frac1{(2\pi)^d}\int_{\T^d}\bigg|\sum_{\bm{k}\in W\cap\Z^d} e^{i(\bm{k},\,\bm{x})}\bigg| {\rm d}{\bm x}
 $$
 for convex polyhedra $W\subset\R^d$  are obtained.
% In contrast to the previously known results, the obtained estimates of the Lebesgue constant can be given in the terms of lengths of the projections of $W$ onto the coordinate axes. Similar estimates of the $L_p$ Lebesgue constants are also derived.
The main result states that if $W$ is a convex polyhedron such that $[0,m_1]\times\dots\times [0,m_d]\subset W\subset [0,n_1]\times\dots\times [0,n_d]$, then
$$
c(d)\prod_{j=1}^d \log(m_j+1)\le \mathcal{L}(W)\le C(d)s\prod_{j=1}^d \log(n_j+1),
$$
where $s$ is size of the triangulation of $W$.
\end{abstract}

\maketitle

\section{Introduction}

Estimates of the Lebesgue constants play an important role in the summation of Fourier series, approximation and interpolation theory, and other branches of analysis. Different asymptotic formulas as well as upper and lower estimates of the Lebesgue constants on the $d$-dimensional torus $\T^d$ have been known for years (see~\cite{D2},~\cite{L}, and~\cite[Ch. 9]{TB}).

In the one-dimensional case, the following asymptotic formula is well-known:
\begin{equation*}
%\label{1Dir}
  \frac1{2\pi}\int_{\T^1}\left| \sum_{k=0}^n e^{ikx}\right|{\rm d}x=\frac1{2\pi}\int_{\T^1}\left|\frac{\sin((n+1)x/2)}{\sin (x/2)} \right|{\rm d}x \backsimeq\frac{4}{\pi^2}\log n.
\end{equation*}
There are numerous generalizations of this result to the multidimensional case. As a rule one takes some set $W\subset\R^d$ and defines the Lebesgue constant by
$$
\mathcal{L}(W):=\frac1{(2\pi)^d}\int_{\T^d}\bigg|\sum_{\bm{k}\in W\cap\Z^d} e^{i(\bm{k},\,\bm{x})}\bigg| {\rm d}{\bm x}.
$$

%In the present paper we will consider only the case when $W$ is some convex polyhedron.

The following important result was proved by Belinsky~\cite{Be} (see also~\cite{P} and~\cite{Ba}).

\begin{thma}\label{thA}
 For any convex $d$-dimensional polyhedron $W\subset \R^d$ and $n\ge 1$, there exist two positive constants $C_1$ and $C_2$ such that
  \begin{equation}\label{eqBe}
    C_1(W)\log^d(n+1)\le \mathcal{L}(nW)\le C_2(W)\log^d(n+1).
  \end{equation}
\end{thma}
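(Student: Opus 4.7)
\emph{Upper bound.} My plan is to reduce to a single simplex via triangulation. Pick a triangulation of $W$ into $s=s(W)$ non-degenerate $d$-simplices $\Delta_1,\ldots,\Delta_s$ with vertices among those of $W$, so that $\mathbf{1}_{nW}=\sum_j \mathbf{1}_{n\Delta_j}$ up to a measure-zero set, and hence by the triangle inequality $\mathcal{L}(nW)\le \sum_{j=1}^{s} \mathcal{L}(n\Delta_j)$. This reduces the problem to bounding $\mathcal{L}(n\Delta)$ for a single $d$-simplex. For that I would use a Brion-type vertex decomposition of the discrete Dirichlet kernel: denoting by $v_0,\ldots,v_d$ the vertices of $n\Delta$ and by $e_{i,1},\ldots,e_{i,d}$ the edge vectors at $v_i$, one has an identity of the form
\begin{equation*}
\sum_{\mathbf{k}\in n\Delta\cap \Z^d} e^{i(\mathbf{k},\mathbf{x})} = \sum_{i=0}^{d} \frac{e^{i(v_i,\mathbf{x})}P_i(\mathbf{x})}{\prod_{j=1}^{d}\bigl(1-e^{i(e_{i,j},\mathbf{x})}\bigr)},
\end{equation*}
with bounded trigonometric numerators $P_i$. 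A linear change of variables aligning the $e_{i,j}$ with the coordinate axes reduces each summand on $\T^d$ to (essentially) the product of $d$ truncated one-dimensional Dirichlet kernels, whose $L^1$-norm is $\prod_{j=1}^{d}\log(n+1)=\log^d(n+1)$ up to a constant depending on $W$.

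\emph{Lower bound.} I would localize near a single vertex $v_0$ of $W$ at which $d$ linearly independent edges meet. After an affine change of coordinates taking $v_0$ to $0$ and aligning these edges with the positive coordinate half-axes (at bounded cost to $\mathcal{L}$ since $W$ is fixed), the set $nW$ contains a box $[0,cn]^d$ locally near the origin, and no other vertex of $nW$ lies near $0$. Choose a smooth bump $\psi\ge 0$ supported in a small neighborhood of $0$ on $\T^d$ with $\widehat{\psi}\ge 0$, so that pairing $|D_{nW}|$ with $\psi$ isolates the vertex contribution at $0$. In that region the Dirichlet kernel essentially factors as a product of $d$ one-dimensional Dirichlet kernels in the edge directions, each having $L^1$-norm of order $\log(n+1)$, and this yields the lower bound $c\log^d(n+1)$.

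\emph{Main obstacle.} The delicate step is the upper bound for a simplex. The rational kernels in the vertex decomposition blow up on the codimension-one resonance planes $\{(e_{i,j},\mathbf{x})\in 2\pi\Z\}$, and individually their $L^1$-norms on $\T^d$ are too large; the bound $\log^d(n+1)$ emerges only after accounting for cancellations between adjacent vertex contributions. I would handle this by introducing a smooth partition of unity on $\T^d$ adapted to the tangent cones of $\Delta$ at its vertices, so that on each piece a single vertex term dominates and the product one-dimensional estimate applies cleanly in each of the $d$ edge directions.
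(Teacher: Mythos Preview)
Theorem~A is quoted in the paper as Belinsky's result and not proved there directly; the paper's own Theorems~4.1 and~4.2, however, contain it as a special case, and it is against those arguments that your outline should be measured.

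\textbf{Upper bound.} You share with the paper only the triangulation step (the paper actually uses inclusion--exclusion, not the bare triangle inequality, since lattice points on shared faces are overcounted by $\sum_j \mathbf{1}_{n\Delta_j}$). After that the approaches diverge completely. The paper never touches a Brion-type vertex formula. Instead, Fourier--Motzkin elimination rewrites the simplex kernel as an iterated sum $D_{\bm{M}^{(d)}}(\bm{x})=\sum_{k_1=0}^{[\Lambda_1]}\cdots\sum_{k_d=0}^{[\Lambda_d(\bm{k}^{d-1})]} e^{i(\bm{k},\bm{x})}$ with affine upper limits, and the Main Lemma proves $\|D_{\bm{M}^{(d)}}\|_{L_1}\lesssim\prod_l\log(N_l+1)$ by induction on $d$: summing the innermost variable yields a splitting $D=G+F$; the ``geometric'' piece $G$ is estimated via Bernstein's inequality, while the remainder $F$ carries the fractional parts $\langle\Lambda_{d+1}(\bm{k}^d)\rangle$ and is controlled by simultaneous Diophantine approximation of the slopes (Dirichlet's theorem) together with a short Fourier-series trick.

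Your Brion route has a genuine gap precisely where you flag it. Each vertex contribution $e^{i(v_i,\mathbf{x})}P_i(\mathbf{x})\big/\prod_j(1-e^{i(e_{i,j},\mathbf{x})})$ is the generating function of the \emph{infinite} tangent cone at $v_i$; after any linear change of variables it becomes (in the unimodular case) essentially $\prod_j 1/(1-e^{iy_j})$, which is \emph{not} a truncated Dirichlet kernel and is \emph{not} in $L^1(\T^d)$. So the sentence ``reduces each summand \ldots\ to the product of $d$ truncated one-dimensional Dirichlet kernels'' is simply false. The partition-of-unity patch does not obviously help: adjacent vertices of a simplex \emph{share} an edge direction, hence share a resonance hyperplane, and on any piece containing that hyperplane both vertex terms blow up with the same order; one cannot make ``a single vertex term dominate'' there. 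What actually recovers a truncated kernel $(1-e^{i(N+1)y})/(1-e^{iy})$ is the \emph{pairwise algebraic cancellation} between the two adjacent vertex terms along that edge, and organizing this cancellation inductively over the face lattice---not a smooth cutoff---is the missing mechanism. A further issue: the linear change aligning the $e_{i,j}$ with the axes is in general not unimodular, so it does not act on $\T^d$, and for non-unimodular cones one needs an additional subdivision.

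\textbf{Lower bound.} The paper's argument is two lines: the multidimensional Hardy inequality $\sum_{\bm k}|a_{\bm k}|\prod_j(k_j+1)^{-1}\lesssim\|\sum a_{\bm k}e^{i(\bm k,\bm x)}\|_{L_1}$ applied with $a_{\bm k}=\mathbf{1}_{nW}(\bm k)$, using only that (after an integer translation) $nW$ contains an axis-aligned box of side $\sim n$. Your localization scheme is plausible but, as written, mixes the roles of physical and frequency space (``supported near $0$ on $\T^d$'' vs.\ ``isolates the vertex contribution''), and the ``affine change of coordinates at bounded cost to $\mathcal{L}$'' fails for non-unimodular maps for the same reason as above.
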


See also in~\cite{AC} an analog of this theorem for $L_p$ Lebesgue constants.

The following question  seems to be very natural.
Is it possible to write a certain asymptotic relation instead of the ordinal estimate~\eqref{eqBe} or at least to find good estimates for the constants $C_1$ and $C_2$?

It turns out that asymptotic relations for $\mathcal{L}(nW)$ can be obtained only for some special polyhedra with good arithmetical properties. For example, it follows from the result of Skopina~\cite{Sk} (see also~\cite{NP}, \cite{P2}) that if slopes of sides of the $s$-sided convex polyhedron $W\subset\R^2$ are rational and this polyhedron has no parallel sides, then
\begin{equation}\label{eqSkop}
  \mathcal{L}(nW)=\frac{2s}{\pi^3}\log^2n+\mathcal{O}(\log n),
\end{equation}
where $\mathcal{O}$ depends on $W$.

At the same time, if $W$ has irrational slopes of sides, then asymptotics~\eqref{eqSkop} does not hold in general (see \cite{NP}, \cite{P2}).
It is also unclear how $\mathcal{O}$ depends on $W$.

Nevertheless, it is possible to find good estimates of the constant $C_2(W)$ in~\eqref{eqBe}. It is known (see~\cite{YuYu}) that if $W$ is an arbitrary $s$-sided polyhedron in $\R^2$ of diameter $n\ge 1$, then
\begin{equation}\label{eqYu2}
  \mathcal{L}(W)\le Cs\log^2 (n+1),
\end{equation}
where $C$ is some absolute constant.

In special cases, it is possible to improve asymptotics~\eqref{eqSkop} and inequalities~\eqref{eqBe} and~\eqref{eqYu2}.
The simplest case is that $W=R_{n_1,\dots,n_d}=[0,n_1]\times\dots\times [0,n_d]$. By the corresponding one-dimensional result, one has
\begin{equation}\label{Rectan+}
%\label{1d}
\mathcal{L}(R_{n_1,\dots,n_d})\backsimeq\prod_{j=1}^d\(\frac{4}{\pi^2}\log n_j\).
\end{equation}

For other types of polyhedra $W$, the problem becomes more complicated and has been considered mainly in the case $d=2$.
Let us mention the result of Kuznetsova~\cite{Ku} for the Lebesgue constant of the rhomb
$$
\D_{n_1,n_2}=\left\{(\xi_1,\xi_2)\in \R^2\,:\,\frac{|\xi_1|}{n_1}+\frac{|\xi_2|}{n_2}\le 1\right\}.
$$
It was proved that the asymptotic equality
  \begin{equation}\label{eqKuz}
    \mathcal{L}(\D_{n_1,n_2})=\frac{32}{\pi^4}\log n_1\log n_2-
    \frac{16}{\pi^4}\log^2 n_1+\mathcal{O}(\log n_2)
  \end{equation}
holds uniformly with respect to all natural $n_1$, $n_2$, and $l=n_2/n_1$.

What differentiates  this result
from many others is that no dilations of a certain fixed domain are taken.
Note that nothing is known about analogs  of~\eqref{eqKuz} for $l$ other than integer and the case of several variables, $d\ge 3$.

In this paper,  we obtain the following improvement and generalization of~\eqref{eqBe} and~\eqref{eqYu2} related to the formulas~\eqref{Rectan+} and~\eqref{eqKuz}. We prove that if $W$ is a bounded convex polyhedron in $\R^d$ such that
$$
[0,m_1]\times\dots\times [0,m_d]\subset W\subset [0,n_1]\times\dots\times [0,n_d],
$$
then for sufficiently large $(n_1,\dots,n_d)$ we have
%  \begin{equation}\label{T1}
%    \left\Vert \sum_{\bm{k}\in P_m\cap \Z_+^d}e^{i(\bm{k},\,\bm{x})}\right\Vert_{L_1(\T^d)}\lesssim m\prod_{j=1}^d\log (n_j+1).
%  \end{equation}
  \begin{equation}\label{T1intr}
    c(d)\prod_{j=1}^d\log (m_j+1)\le\mathcal{L}(W)\le C(d)s\prod_{j=1}^d\log (n_j+1),
  \end{equation}
where $s$ is size of some triangulation of $W$ (see Theorem~\ref{cor1} and Theorem~\ref{thBel} below).

Recall that the size of a triangulation is the number of tetrahedra (simplices) in the triangulation. It is well-known that any convex polyhedron $W$ with $m$ vertices can be represented as a union of at most $\mathcal{O}(m)$ tetrahedra $T_j$, $j=1,\dots,\mathcal{O}(m)$, such that
%$$
%P_m=\bigcup_{j=1}^{\mathcal{O}(m)}T_j,
%$$
$T_j\cap T_i$, $i\neq j$, is either empty or a face of both tetrahedra (see~\cite{BEG}, see also~\cite[p. 842]{RMGGS}).

 It is easy to see that \eqref{T1intr} complements~\eqref{eqKuz} in the case of several variables and yields a sharper version of~\eqref{eqBe} and~\eqref{eqYu2} for some  classes of polyhedra. For example, if
$$
\D_{\bm{n}}=\left\{\bm{\xi}\in \R_+^d\,:\,\sum_{j=1}^d\frac{\xi_j}{n_j}\le 1\right\},
$$
then
\begin{equation*}
%\label{VVcM1}
 c(d)\prod_{j=1}^d\log (n_j+1)\le \mathcal{L}(\D_{\bm{n}})\le C(d)\prod_{j=1}^d\log (n_j+1),
\end{equation*}
where $c$ and $C$ are some positive constants depending only on $d$.

In this paper, we also obtain new estimates of growth of the $L_p$ Lebesgue constants for convex polyhedra (see Theorem~\ref{thMp} and Theorem~\ref{corMp} below). These estimates represent improvements of the corresponding results of the papers~\cite{Ash} and \cite{AC}.

Finally, let us note that the results of this paper can be applied to the multivariate interpolation on the Lissajous-Chebyshev nodes (see~\cite{DE}). In particular, if $d=2$, then the two-sided inequality~\eqref{T1intr}, see also (\ref{M1QT}) below, gives new sharp estimates for the error of approximation of functions by polynomials of the bivariate Lagrange interpolation at the node points of the Lissajous curves (see~\cite{E}).

\subsection {Work organization}
The paper is organized as follows: In Section 2 we provide the basic notation and preliminary remarks needed everywhere below. In Section 3 we
collect auxiliary results.  In Section 4 we prove the main results of the article and provide some examples of their applications to particular classes of polyhedra. Section 5 is devoted to the $L_p$ Lebesgue constants of convex polyhedra.

\section{Basic notations and preliminary remarks}

Let $\T^d \simeq (-\pi,\pi]^d$, $d=1,2,\dots$, be the $d$-dimensional torus.
We use the following notation
$\bm{x}^d=(x_1,\dots,x_d)\in \T^{d},$
$
\bm{k}^d=(k_1,\dots,k_d)\in \Z_+^{d},
$
$
\bm{\xi}^d=(\xi_1,\dots,\xi_d)\in \R_+^d,
$
$
(\bm{x}^d,\bm{k}^d)=k_1x_1+\dots+k_dx_d,
$
and
$$
\Vert f\Vert_{L_p(\T^d)}=\(\int_{\T^d}|f({\bm x}^d)|^p {\rm d}{\bm x}^d\)^\frac1p,\quad 1\le p<\infty.
$$
Denote
$
\bm{n}^d=(n_1,\dots,n_d)\in \R^d,
$
$
\bm{m}^{(d)}=(m_1^{(d)},\dots,m_d^{(d)})\in \R^d,
$
and
$$
\bm{M}^{(d)}= \begin{pmatrix}
n_1 & 0 & 0 & \cdots & 0 \\
n_2 & m_{1}^{(1)} & 0 & \cdots & 0 \\
n_3 & m_{1}^{(2)} & m_{2}^{(2)} & \cdots & 0 \\
\vdots & \vdots & \vdots & & \vdots \\
n_{d} & m_{1}^{(d-1)} & m_{2}^{(d-1)} & \cdots & m_{d-1}^{(d-1)}
\end{pmatrix}.
$$
With such vectors $\bm{n}^d$, $\bm{m}^{(d)}$, and the matrix $\bm{M}^{(d)}$ we associate the following vector function
$$
\bm{\L}^d=(\L_1,\dots,\L_d)\,:\, \R^{d-1}_+\mapsto \R^{d},
$$
where
$
\L_1=n_1
$
and
%for $s=2,\dots,d$
\begin{equation}\label{FORML}
  \L_s=\L_s(\bm{\xi}^{s-1})=\L_s(\bm{\xi}^{s-1};\bm{M}^{(s)}):=n_s-(\bm{m}^{(s-1)},\bm{\xi}^{s-1}),\quad s=2,3\dots.
\end{equation}

%Throughout the paper we suppose inductively that
%\begin{equation}\label{suppose}
%  \L_{s+1}(\bm{\xi}^{s})\ge 0
%\end{equation}
%for all $\bm{\xi}^{s}\in \R_+^s$ such that $0\le \xi_l\le \L_{l}(\bm{\xi}^{l-1})$, $l=1,\dots,s$.

By $P(\bm{\L}^d)$ we denote a polyhedron in $\R^d$ which is defined as a set of vectors $\bm{\xi}^d$ satisfying the system
\begin{equation}\label{sys}
\left\{
  \begin{array}{ll}
    0  \le  \xi_1\le \L_1, \\
    0  \le  \xi_s\le \L_s(\bm{\xi}^{s-1}), & \hbox{$s=2,\dots,d,$}
  \end{array}
\right.
\end{equation}
(see also~\eqref{sys<} below).
In particular, if
the matrix $\bm{M}^{(d)}$ is such that $m_j^{(s)}=0$ for $j=1,\dots,s$ and $s=1,\dots,d-1$,
%$$
%\bm{M}^{(d)}= \begin{pmatrix}
%n_1 & 0 & 0 & \cdots & 0 \\
%n_2 & 0 & 0 & \cdots & 0 \\
%n_3 & 0 & 0 & \cdots & 0 \\
%\vdots & \vdots & \vdots & & \vdots \\
%n_{d} & 0 & 0 & \cdots & 0
%\end{pmatrix},
%$$
then $P(\bm{\L}^d)=[0,n_1]\times\dots\times[0,n_d]$ is a rectangle. At the same time, if
$$
\bm{M}^{(d)}= \begin{pmatrix}
n_1 & 0 & 0 & \cdots & 0 \\
n_2 & {n_2}/{n_1} & 0 & \cdots & 0 \\
n_3 & {n_3}/{n_1} & {n_3}/{n_2} & \cdots & 0 \\
\vdots & \vdots & \vdots &  & \vdots \\
n_{d} & {n_d}/{n_1} & {n_3}/{n_2} & \cdots & {n_d}/{n_{d-1}}
\end{pmatrix},
$$
then $$
P(\bm{\L}^d)=\D_{\bm{n}}=\left\{\bm{\xi}\in \R_+^d\,:\,\sum_{j=1}^d\frac{\xi_j}{n_j}\le 1\right\}
$$
is a $d$-dimensional tetrahedron.

The floor, the ceiling, and the fractional part functions are as usual defined by
$$
\lfloor x \rfloor =\max\{m\in \Z\,:\, m\le x\},
$$
$$
\lceil x \rceil =\min\{n\in \Z\,:\, n\ge x\},
$$
and
$$
\{x\}=x-\lfloor x \rfloor,
$$
correspondingly.

By $[x]$ we denote $\lfloor x \rfloor$ or $\lceil x \rceil$. If necessary, we will specify in the corresponding line the meaning of $[\,\cdot\,]$. In a similar manner,  by $\langle x \rangle$  we denote $\{x\}$ or $x-\lceil x \rceil$.

One of the main objects of this paper is the following Dirichlet type kernel:
$$
D_{ \bm{M}^{(d)}}(\bm{x}^d):=\sum_{\bm{k}^d=0}^{[\bm{\L}^d]}e^{i(\bm{k}^d,\,\bm{x}^d)}=\sum_{k_1=0}^{[\L_1]}\sum_{k_2=0}^{[\L_2(k_1)]}\dots \sum_{k_d=0}^{[\L_d(\bm{k}^{d-1})]}e^{i(\bm{k}^d,\,\bm{x}^d)}.
$$

Note that, if $P(\bm{\L}^d)$ is defined by~\eqref{sys}, then
\begin{equation*}
  \begin{split}
    \sum_{\bm{k}^d\in P(\bm{\L}^d) \cap \Z_+^d}e^{i(\bm{k}^d,\,\bm{x}^d)}
&=\sum_{k_1=0}^{[\L_1]}\sum_{k_2=0}^{[\L_2(k_1)]}\dots \sum_{k_d=0}^{[\L_d(\bm{k}^{d-1})]}e^{i(\bm{k}^d,\,\bm{x}^d)}\\
&=\sum_{k_1=0}^{\lfloor \L_1 \rfloor}\sum_{k_2=0}^{\lfloor \L_2(k_1)\rfloor}\dots \sum_{k_d=0}^{\lfloor\L_d(\bm{k}^{d-1})\rfloor}e^{i(\bm{k}^d,\,\bm{x}^d)}.
   \end{split}
\end{equation*}
At the same time, if $P(\bm{\L}^d)$ is defined as a set of vectors $\bm{\xi}^d=(\xi_1,\dots,\xi_d)$ which satisfy the system
\begin{equation}\label{sys<}
\left\{
  \begin{array}{ll}
    0  \le  \xi_1< \L_1+1, \\
    0  \le  \xi_s< \L_s(\bm{\xi}^{s-1})+1, & \hbox{$s=2,\dots,d$},
  \end{array}
\right.
\end{equation}
then
\begin{equation*}
  \begin{split}
    \sum_{\bm{k}^d\in P(\bm{\L}^d) \cap \Z_+^d}e^{i(\bm{k}^d,\,\bm{x}^d)}
=\sum_{k_1=0}^{\lceil \L_1 \rceil}\sum_{k_2=0}^{\lceil \L_2(k_1)\rceil}\dots \sum_{k_d=0}^{\lceil\L_d(\bm{k}^{d-1})\rceil}e^{i(\bm{k}^d,\,\bm{x}^d)}.
   \end{split}
\end{equation*}

Throughout the paper, we suppose that $\sum_{{\bm k}\in \varnothing}=0$ and $\sum_{k=A}^B(\dots)=0$ if $A>B$.
We always take into account this remark, when using the equality
\begin{equation}\label{equ}
  \sum_{k=A}^B(\dots)=\sum_{k=0}^B(\dots)-\sum_{k=0}^{A-1}(\dots)
\end{equation}
for $0\le A<B$.

Let $d=2,3,\dots$. Denote
\begin{equation*}
  \begin{split}
     &G_{\bm{M}^{(d)}}(\bm{x}^d)\\
&:=\frac1{e^{ix_d}-1}\(e^{i(n_d+1)x_d}D_{\bm{M}^{(d-1)}}(\bm{x}^{d-1}-{\bm{m}^{(d-1)}}x_d)-D_{ \bm{M}^{(d-1)}}(\bm{x}^{d-1})\)
   \end{split}
\end{equation*}
and
\begin{equation*}
  \begin{split}
     F_{\bm{M}^{(d)}}(\bm{x}^d)
&:=\frac{e^{i(n_d+1)x_d}}{e^{ix_d}-1}
\sum_{\bm{k}^{d-1}=0}^{[\bm{\L}^{d-1}]}
e^{i(\bm{k}^{d-1},\,\bm{x}^{d-1}-{\bm{m}^{(d-1)}}x_d)}
\(e^{-i \langle \L_d(\bm{k}^{d-1})\rangle x_d}-1\),
   \end{split}
\end{equation*}
where $\bm{m}^{(s)}u:=(m_1^{(s)} u,\dots, m_s^{(s)}u)$ and $u\in \T^1$.

By $\deg_l T$, $l=1,\dots,d$, we denote the order of a trigonometric polynomial $T(\bm{x}^d)$ in the variable $x_l$.
Throughout the paper we use the notation $\, A \lesssim B$ for the
estimate $\, A \le c\, B$, where $A$ and $B$ are some nonnegative functions and $c$ is a positive constant independent of
the appropriate variables in $\, A$ and $\, B.$  Up to Section~\ref{SecLp} this constant $c$ depends only on the dimension $d$. Below $A\asymp B$ means that $A\lesssim B$ and $B\lesssim A$ simultaneously.
In what follows the sign "$\preccurlyeq$" means "$<$" or "$\le$". The concrete meaning of "$\preccurlyeq$" will be explained in the appropriate place.  By $C(\cdot)$ or $C_j(\cdot)$, $j=1,2,\dots$, we denote some positive constants that depend on indicated parameters.

\section{Auxiliary results}

\begin{lemma}\label{leFG}
Let $d\ge 2$. Then
\begin{equation}\label{FG}
  D_{\bm{M}^{(d)}}(\bm{x}^d)=G_{\bm{M}^{(d)}}(\bm{x}^d)+F_{ \bm{M}^{(d)}}(\bm{x}^d).
\end{equation}
\end{lemma}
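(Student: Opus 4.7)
The plan is to carry out the summation over $k_d$ explicitly as a geometric series and then algebraically separate the result into the $G$-part and the $F$-part. First, I would isolate the innermost sum in the definition of $D_{\bm{M}^{(d)}}$ and evaluate
$$
\sum_{k_d=0}^{[\L_d(\bm{k}^{d-1})]} e^{ik_dx_d}=\frac{e^{i([\L_d(\bm{k}^{d-1})]+1)x_d}-1}{e^{ix_d}-1}.
$$
The key bookkeeping step is the identity $[\L_d(\bm{k}^{d-1})]+1=\L_d(\bm{k}^{d-1})+1-\langle \L_d(\bm{k}^{d-1})\rangle$, which is valid under the compatible interpretation of $[\,\cdot\,]$ (floor or ceiling) and $\langle\,\cdot\,\rangle$ fixed in Section~2. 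Plugging in the definition $\L_d(\bm{k}^{d-1})=n_d-(\bm{m}^{(d-1)},\bm{k}^{d-1})$ from \eqref{FORML} then gives
$$
e^{i([\L_d]+1)x_d}=e^{i(n_d+1)x_d}\,e^{-i(\bm{m}^{(d-1)},\bm{k}^{d-1})x_d}\,e^{-i\langle \L_d(\bm{k}^{d-1})\rangle x_d}.
$$

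Next, I would perform the basic algebraic trick of adding and subtracting the ``intermediate'' term $e^{i(n_d+1)x_d}e^{-i(\bm{m}^{(d-1)},\bm{k}^{d-1})x_d}$ in the numerator, writing
\begin{equation*}
\begin{split}
&e^{i(n_d+1)x_d}e^{-i(\bm{m}^{(d-1)},\bm{k}^{d-1})x_d}e^{-i\langle\L_d\rangle x_d}-1\\
&\quad=e^{i(n_d+1)x_d}e^{-i(\bm{m}^{(d-1)},\bm{k}^{d-1})x_d}\bigl(e^{-i\langle\L_d\rangle x_d}-1\bigr)\\
&\qquad+\bigl(e^{i(n_d+1)x_d}e^{-i(\bm{m}^{(d-1)},\bm{k}^{d-1})x_d}-1\bigr).
\end{split}
\end{equation*}
Multiplying each side by $e^{i(\bm{k}^{d-1},\bm{x}^{d-1})}/(e^{ix_d}-1)$, summing over $\bm{k}^{d-1}$ from $0$ to $[\bm{\L}^{d-1}]$, and collecting the common factor $e^{-i(\bm{m}^{(d-1)},\bm{k}^{d-1})x_d}$ with $e^{i(\bm{k}^{d-1},\bm{x}^{d-1})}$ into $e^{i(\bm{k}^{d-1},\bm{x}^{d-1}-\bm{m}^{(d-1)}x_d)}$, one obtains two groups of terms.

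The first group is, by construction, exactly $F_{\bm{M}^{(d)}}(\bm{x}^d)$. The second group splits as a difference of two sums, which by the definition of $D_{\bm{M}^{(d-1)}}$ equal $D_{\bm{M}^{(d-1)}}(\bm{x}^{d-1}-\bm{m}^{(d-1)}x_d)$ and $D_{\bm{M}^{(d-1)}}(\bm{x}^{d-1})$; the resulting expression is exactly $G_{\bm{M}^{(d)}}(\bm{x}^d)$. This yields \eqref{FG}. The proof is really just a computation, and I do not anticipate a substantive obstacle; the only point that needs genuine care is to verify that the chosen interpretation of $[\,\cdot\,]$ and $\langle\,\cdot\,\rangle$ makes the displayed identity $[\L_d]+1=\L_d+1-\langle\L_d\rangle$ hold consistently for every $\bm{k}^{d-1}$ in the summation range, so that no boundary terms are mishandled when invoking the convention $\sum_{k=A}^{B}=0$ for $A>B$.
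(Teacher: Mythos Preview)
Your proposal is correct and is essentially identical to the paper's proof: the paper also sums the geometric series in $k_d$ and then applies the splitting
\[
e^{i([\L_d]+1)x_d}-1=\bigl(e^{i(\L_d+1)x_d}-1\bigr)+e^{i(\L_d+1)x_d}\bigl(e^{-i\langle\L_d\rangle x_d}-1\bigr),
\]
which is precisely your add-and-subtract identity once one substitutes $\L_d=n_d-(\bm{m}^{(d-1)},\bm{k}^{d-1})$. Your concern about the compatibility of $[\,\cdot\,]$ and $\langle\,\cdot\,\rangle$ is already built into the paper's conventions, since by definition $\langle x\rangle=x-[x]$ in either interpretation, so $[\L_d]+1=\L_d+1-\langle\L_d\rangle$ holds automatically.
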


\begin{proof}
Note that
\begin{equation}\label{FG1}
  D_{\bm{M}^{(d)}}(\bm{x}^d)= \sum_{\bm{k}^{d-1}=0}^{[\bm{\L}^{d-1}]} e^{i(\bm{k}^{d-1},\,\bm{x}^{d-1})}
\sum_{k_d=0}^{[\L_d(\bm{k}^{d-1})]}e^{ik_dx_d}
\end{equation}
and
\begin{equation}\label{FG2}
  \sum_{k_d=0}^{[\L_d(\bm{k}^{d-1})]}e^{ik_dx_d}=\frac{e^{i\([\L_d(\bm{k}^{d-1})]+1\)x_d}-1}{e^{ix_d}-1}.
\end{equation}
Thus, by using \eqref{FG1} and \eqref{FG2} and  taking into account that
\begin{equation*}
  e^{i\([\L_d(\bm{k}^{d-1})]+1\)x_d}-1=e^{i\(\L_d(\bm{k}^{d-1})+1\)x_d}-1
+e^{i\(\L_d(\bm{k}^{d-1})+1\)x_d}\(e^{-i\langle\L_d(\bm{k}^{d-1})\rangle x_d}-1\),
\end{equation*}
we get~\eqref{FG}.
\end{proof}

\begin{lemma}\label{le4}
Let
$$
S_t(x):=\frac{e^{i(t+1)x}-1}{e^{ix}-1},\quad t>0,\quad x\in \T^1.
$$
Then
\begin{equation}\label{Slog}
  \Vert S_{t}\Vert_{L_1(\T^1)}\lesssim \log (t+1).
\end{equation}
%\begin{equation}\label{SN}
%  \Vert S_{N}\Vert_{L_\infty(\T^1)}\lesssim N,
%\end{equation}
%\begin{equation}\label{S'N}
%  \Vert S_{N}'\Vert_{L_\infty(\T^1)}\lesssim N^2,
%\end{equation}
\end{lemma}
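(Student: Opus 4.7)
The plan is to reduce the bound for non-integer $t$ to the classical one-dimensional Dirichlet kernel estimate, exploiting the identity
$$
e^{i(t+1)x}-1 = \bigl(e^{i(\lfloor t\rfloor+1)x}-1\bigr) + e^{i(\lfloor t\rfloor+1)x}\bigl(e^{i\{t\}x}-1\bigr).
$$
Dividing by $e^{ix}-1$, I would split $S_t$ into two pieces: the classical Dirichlet kernel $\sum_{k=0}^{\lfloor t\rfloor}e^{ikx}$ and a remainder $R_t(x) = \frac{e^{i(\lfloor t\rfloor+1)x}(e^{i\{t\}x}-1)}{e^{ix}-1}$.

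For the first piece, the standard one-dimensional Lebesgue constant asymptotic (stated already at the start of the introduction) gives $\|\sum_{k=0}^{\lfloor t\rfloor}e^{ikx}\|_{L_1(\T^1)}\lesssim \log(\lfloor t\rfloor+1)\lesssim \log(t+1)$. For the remainder, I would use $|R_t(x)| = |\sin(\{t\}x/2)|/|\sin(x/2)|$ together with the elementary bounds $|\sin(\{t\}x/2)|\le \{t\}|x|/2 \le |x|/2$ and $|\sin(x/2)|\ge |x|/\pi$ on $\T^1$, which give $|R_t(x)|\le \pi/2$ pointwise, so its $L_1$-norm is bounded by an absolute constant. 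Adding these two contributions yields \eqref{Slog}.

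An alternative and equally short plan is a direct one: write $|S_t(x)|=|\sin((t+1)x/2)|/|\sin(x/2)|$ and split the integral over $\T^1$ into the regions $|x|\le 1/(t+1)$ and $1/(t+1)<|x|\le\pi$. On the first region, bound $|\sin((t+1)x/2)|\le (t+1)|x|/2$ and $|\sin(x/2)|\ge |x|/\pi$ to obtain an $O(1)$ contribution; on the second, bound $|\sin((t+1)x/2)|\le 1$ and use $|\sin(x/2)|\ge|x|/\pi$ again to get $\int_{1/(t+1)}^{\pi}\frac{\pi}{x}\,dx\lesssim \log(t+1)$. I would favor the first approach since it transfers the bound cleanly from the integer case and makes the dependence on $\{t\}$ transparent.

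I do not expect any real obstacle: the estimate is the standard Lebesgue-constant bound, and the only extra ingredient over the classical statement is handling the fractional part $\{t\}$, which is harmless because $e^{i\{t\}x}-1$ carries a factor of order $|x|$ that cancels the singularity of $1/(e^{ix}-1)$.
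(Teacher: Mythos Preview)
Your proposal is correct. Your ``alternative'' direct approach---splitting the integral at $|x|=1/(t+1)$ and bounding $|S_t|$ by $t+1$ near the origin and by $1/|x|$ away from it---is exactly what the paper does. Your favored approach (splitting off the integer Dirichlet kernel $\sum_{k=0}^{\lfloor t\rfloor}e^{ikx}$ and bounding the remainder $R_t$ pointwise by $\pi/2$) is a genuinely different route: it reduces the lemma to the classical integer Lebesgue-constant estimate rather than re-deriving it, and it foreshadows the decomposition $D=G+F$ that the paper uses later in Lemma~\ref{leFG}, where the fractional-part contribution is isolated in the same way. The paper's direct argument is self-contained and does not appeal to the integer case; your decomposition is slightly cleaner conceptually and makes the role of $\{t\}$ explicit. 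Either approach yields a bound of the form $C_1+C_2\log(t+1)$, which is $\lesssim\log(t+1)$ once $t$ is bounded away from $0$; this minor caveat is present in the paper's argument as well.
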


\begin{proof}
To prove~\eqref{Slog}, we note that for $|x|\le 1/(t+1)$
\begin{equation}\label{Slog1}
  \left|\frac{e^{i(t+1)x}-1}{e^{ix}-1}\right|\lesssim t+1
\end{equation}
and for $1/(t+1)\le |x|\le \pi$
\begin{equation}\label{Slog2}
  \left|\frac{e^{i(t+1)x}-1}{e^{ix}-1}\right| \le \frac{2}{|e^{ix}-1|}\lesssim \frac1{|x|}.
\end{equation}

Now, by using \eqref{Slog1} and \eqref{Slog2}, we get
\begin{equation*}
  \begin{split}
     \Vert S_{t}\Vert_{L_1(\T^1)}&=\int_{|x|\le 1/(t+1)}+\int_{1/(t+1)\le |x|\le \pi}\\
&\lesssim \int_{|x|\le 1/(t+1)}(t+1){\rm d}x+\int_{1/(t+1)\le |x|\le \pi} \frac{{\rm d}x}{|x|}\lesssim \log(t+1).
   \end{split}
\end{equation*}

\end{proof}

%We will also use the classical Bernstein and Nikolskii inequalities, see~\cite[p. 102]{DeLo}.
%
%\begin{lemma}\label{BN}
%Let $N\in \N$. Then for any trigonometric polynomials $T_N(x)$, $x\in\T$, of order at most $N$ one has
%
%\noindent 1)
%$$
%\Vert T_N'\Vert_\infty \le N \Vert T_N\Vert_\infty,
%$$
%
%\noindent 2)
%$$
%\Vert T_N\Vert_\infty \lesssim N \Vert T_N\Vert_1.
%$$
%
%\end{lemma}

\begin{lemma}\label{leG}
Let $N\ge N_l=\deg_l D_{\bm{M}^{(d+1)}}\ge 1$, $l=1,\dots,d+1$. Then
\begin{equation}\label{G}
 \Vert G_{\bm{M}^{(d+1)}} \Vert_{L_1(\T^{d+1})}\lesssim \log (N+1)\Vert D_{\bm{M}^{(d)}} \Vert_{L_1(\T^{d})}.
\end{equation}
\end{lemma}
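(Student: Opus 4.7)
The approach is to decompose the numerator of $G_{\bm{M}^{(d+1)}}$ by adding and subtracting $D_{\bm{M}^{(d)}}(\bm{x}^d-\bm{m}^{(d)}x_{d+1})$, writing $G_{\bm{M}^{(d+1)}}=G_1+G_2$, where
\begin{equation*}
G_1(\bm{x}^{d+1}):=\frac{e^{i(n_{d+1}+1)x_{d+1}}-1}{e^{ix_{d+1}}-1}\,D_{\bm{M}^{(d)}}(\bm{x}^d-\bm{m}^{(d)}x_{d+1})
\end{equation*}
is a genuine product and
\begin{equation*}
G_2(\bm{x}^{d+1}):=\frac{D_{\bm{M}^{(d)}}(\bm{x}^d-\bm{m}^{(d)}x_{d+1})-D_{\bm{M}^{(d)}}(\bm{x}^d)}{e^{ix_{d+1}}-1}
\end{equation*}
is a first-order divided difference of $D_{\bm{M}^{(d)}}$ along the direction $\bm{m}^{(d)}$. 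Lemma~\ref{le4} will handle $G_1$ directly, while the smoothness of $D_{\bm{M}^{(d)}}$ must be invoked for $G_2$ in order to tame the singularity $1/(e^{ix_{d+1}}-1)$ near $x_{d+1}=0$.

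For $G_1$, Fubini combined with the translation-invariance of Haar measure on $\T^d$ factorises the integral:
\begin{equation*}
\|G_1\|_{L^1(\T^{d+1})}=\|S_{n_{d+1}}\|_{L^1(\T)}\cdot\|D_{\bm{M}^{(d)}}\|_{L^1(\T^d)}\lesssim\log(n_{d+1}+1)\,\|D_{\bm{M}^{(d)}}\|_{L^1(\T^d)}\le\log(N+1)\,\|D_{\bm{M}^{(d)}}\|_{L^1(\T^d)}
\end{equation*}
by Lemma~\ref{le4}. For $G_2$, set $\omega(x):=\|D_{\bm{M}^{(d)}}(\cdot-\bm{m}^{(d)}x)-D_{\bm{M}^{(d)}}(\cdot)\|_{L^1(\T^d)}$ and use $|e^{ix}-1|\asymp|x|$ on $\T$, splitting the $x_{d+1}$-integration at $|x_{d+1}|=1/(N+1)$. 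On the outer range, the trivial bound $\omega\le 2\|D_{\bm{M}^{(d)}}\|_{L^1(\T^d)}$ together with $\int_{1/(N+1)\le|x|\le\pi}|x|^{-1}\,dx\asymp\log(N+1)$ supplies exactly the required logarithmic factor. On the inner range, I would invoke multivariate Bernstein in $L^1$ (applied coordinate-wise to the trigonometric polynomial $D_{\bm{M}^{(d)}}$) to obtain
\begin{equation*}
\omega(x)\le|x|\,\|\bm{m}^{(d)}\cdot\nabla D_{\bm{M}^{(d)}}\|_{L^1(\T^d)}\le|x|\sum_{j=1}^d|m_j^{(d)}|\,\deg_j D_{\bm{M}^{(d)}}\cdot\|D_{\bm{M}^{(d)}}\|_{L^1(\T^d)},
\end{equation*}
so that the factor $|x|$ cancels the $1/|x|$ singularity and the inner integral is bounded by $(N+1)^{-1}\sum_j|m_j^{(d)}|\deg_j D_{\bm{M}^{(d)}}\cdot\|D_{\bm{M}^{(d)}}\|_{L^1(\T^d)}$.

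The main technical obstacle is the geometric estimate $|m_j^{(d)}|\deg_j D_{\bm{M}^{(d)}}\lesssim N$, without which the near-origin contribution does not collapse to $O(\|D_{\bm{M}^{(d)}}\|_{L^1(\T^d)})$. This is extracted from the defining constraint $\L_{d+1}(\bm{k}^d)=n_{d+1}-(\bm{m}^{(d)},\bm{k}^d)\ge 0$ on the summation support of $D_{\bm{M}^{(d+1)}}$: evaluating at a $\bm{k}^d$ supported on a single coordinate forces $|m_j^{(d)}|\cdot k_j\le n_{d+1}\le N_{d+1}\le N$, with routine sign bookkeeping when some slope is negative (controlled by the convexity/shape of $P(\bm{\L}^{d+1})$). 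Assembling the estimates for $G_1$ and for both halves of $G_2$ then produces $\|G_{\bm{M}^{(d+1)}}\|_{L^1(\T^{d+1})}\lesssim\log(N+1)\|D_{\bm{M}^{(d)}}\|_{L^1(\T^d)}$, as required.
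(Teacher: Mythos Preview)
Your approach is essentially the same as the paper's: split $G_{\bm{M}^{(d+1)}}$ into a product term handled by Lemma~\ref{le4} plus a difference quotient, then break the $x_{d+1}$-integration into a near-zero piece (controlled by Bernstein) and a far piece (controlled by the trivial bound and $\int |x|^{-1}\,dx$). The only structural difference is that the paper telescopes the difference quotient coordinate-by-coordinate, writing
\[
\frac{D_{\bm{M}^{(d)}}(\bm{x}^{d}-\bm{m}^{(d)}x_{d+1})-D_{\bm{M}^{(d)}}(\bm{x}^{d})}{e^{ix_{d+1}}-1}=\sum_{l=1}^{d}V_l,
\]
and then chooses an $l$-dependent splitting threshold $1/(N m_l')$ with $m_l'=\max\{|m_l^{(d)}|,1\}$, whereas you treat the whole difference at once with threshold $1/(N+1)$.

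The practical consequence is that your inner-region bound collapses only if $|m_j^{(d)}|\,N_j\lesssim N$, which you flag as the main obstacle. Your justification (``evaluate at a $\bm{k}^d$ supported on a single coordinate'') is not quite complete: the point realizing $k_j=N_j$ need not lie on the $j$-th axis, and cancellation among the other $m_l^{(d)}k_l$ terms in the constraint $0\le n_{d+1}-(\bm{m}^{(d)},\bm{k}^d)\le N_{d+1}$ must be ruled out. The paper's adapted threshold neatly sidesteps this: with cut at $1/(Nm_l')$ the near-zero piece gives $|m_l^{(d)}|N_l/(Nm_l')\le N_l/N\le 1$ automatically, and for the far piece one only needs the weaker bound $|m_l^{(d)}|\lesssim N$ to control $\log(Nm_l')\lesssim\log N$. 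So the paper's telescoping is not merely cosmetic---it buys a cleaner geometric input.
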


\begin{proof}
Denote
$$
V_l(\bm{x}^d):=\frac{D_{\bm{M}^{(d)}}(\bm{x}^{d}-{\bm{m}_l^{(d)}}x_{d+1})-D_{ \bm{M}^{(d)}}(\bm{x}^{d}-{\bm{m}_{l+1}^{(d)}}x_{d+1})}{e^{ix_{d+1}}-1},
$$
where
$$
{\bm{m}_l^{(d)}}x_{d+1}:=(\underbrace{0,\dots,0}_{l-1},m_{l}^{(d)}  x_{d+1},\dots,m_d^{(d)} x_{d+1})\in \R_+^{d},\quad l=1,\dots,d,
$$
and
$$
{\bm{m}_{d+1}^{(d)}}x_{d+1}:=(0,\dots,0)\in \R_+^{d}.
$$

One has
\begin{equation*}
%\label{G1}
  \begin{split}
     G_{\bm{M}^{(d+1)}}(\bm{x}^{d+1})&=
     D_{ \bm{M}^{(d)}}(\bm{x}^{d})S_{n_{d+1}}({x}_{d+1})+
\frac{D_{\bm{M}^{(d)}}(\bm{x}^{d}-{\bm{m}^{(d)}}x_{d+1})-
D_{\bm{M}^{(d)}}(\bm{x}^{d})}{e^{ix_{d+1}}-1}\\
&=D_{\bm{M}^{(d)}}(\bm{x}^{d})S_{n_{d+1}}({x}_{d+1})+\sum_{l=1}^{d}V_l(\bm{x}^d).
   \end{split}
\end{equation*}
Therefore, taking into account that $0\le n_{d+1}\le N$ and Lemma~\ref{le4}, we get
\begin{equation}\label{G1}
  \begin{split}
\Vert G_{\bm{M}^{(d+1)}} \Vert_{L_1(\T^{d+1})}&\le
\Vert  S_{n_{d+1}}\Vert_{L_1(\T^{1})}
\Vert D_{\bm{M}^{(d)}} \Vert_{L_1(\T^{d})}
+\sum_{l=1}^{d}
\Vert V_l \Vert_{L_1(\T^{d+1})}\\
&\lesssim \log (N+1)\Vert D_{\bm{M}^{(d)}} \Vert_{L_1(\T^{d})}+\sum_{l=1}^{d}\Vert V_l \Vert_{L_1(\T^{d+1})}.
   \end{split}
\end{equation}

To estimate $\Vert V_l \Vert_{L_1(\T^{d+1})}$, we denote $m_l'=\max\{|m_l^{(d)}|,1\}$. We have
\begin{equation}\label{G2}
  \Vert V_l \Vert_{L_1(\T^{d+1})}=\int_{|x_{d+1}|\ge 1/(N m_l')} + \int_{|x_{d+1}|< 1/(N m_l')}:=I_1+I_2.
\end{equation}

%Let $N_l=\deg_l D_{\bm{M}^{(d+1)}}$.
It is easy to see that $|m_l^{(d)}|\le 2N$. Indeed,
$
0\le \L_{d+1}({\bm k}^d)=n_{d+1}-({\bm m}^{(d)},{\bm k}^d)\le N_{d+1}
$
for all admissible ${\bm k}^d\in\Z_+^d$ (that is for those ${\bm k}^d$ which belong to the region of summation in $\sum_{\bm k^d=0}^{[\bm\L^d]}$). Hence, $|m_l^{(d)}|k_l\le N_{d+1}+n_{d+1}\le 2N$ and we obviously have the desired inequality, from which we derive
\begin{equation}\label{forG2++++++++++++}
  \begin{split}
     \log(N m_l')=\log N+\log\(\max\{|m_l^{(d)}|,1\}\)\le 2\log N+1.
   \end{split}
\end{equation}
Using~\eqref{forG2++++++++++++}, we get
\begin{equation}\label{forG2}
\begin{split}
  I_1&\lesssim \int_{|x_{d+1}|\ge 1/(N m_l')}
\frac{|D_{\bm{M}^{(d)}}(\bm{x}^{d}-{\bm{m}_l^{(d)}}x_{d+1})|+|D_{ \bm{M}^{(d)}}(\bm{x}^{d}-{\bm{m}_{l+1}^{(d)}}x_{d+1})|}{|x_{d+1}|}{\rm d}\bm{x}^{d+1}
\\
&\lesssim
\int_{|x_{d+1}|\ge 1/(N m_l')}
\frac{|D_{\bm{M}^{(d)}}(\bm{x}^{d})|}{|x_{d+1}|}
{\rm d}{x}_{d+1}{\rm d}\bm{x}^{d}
\lesssim (\log N+1) \Vert D_{\bm{M}^{(d)}}\Vert_{L_1(\T^{d})}.
\end{split}
\end{equation}
Now, let us estimate $I_2$. Denote $h={\bm{m}_l^{(d)}}x_{d+1}-\bm{m}_{l+1}^{(d)} x_{d+1}$. By the  classical Bernstein inequality (see~\cite[p. 102]{DeLo}), we get
\begin{equation}\label{BB}
\begin{split}
    \int_{\T^1}| D_{\bm{M}^{(d)}}(\bm{x}^{d}-h)-D_{\bm{M}^{(d)}}(\bm{x}^{d})|{\rm d}x_l&\le |h| \int_{\T^1}\bigg|\frac{\partial}{\partial x_{l}}D_{\bm{M}^{(d)}}(\bm{x}^{d})\bigg|{\rm d}x_l\\
    &\le |h|N_l \int_{\T^1}|D_{\bm{M}^{(d)}}(\bm{x}^{d})|{\rm d}x_l.
\end{split}
\end{equation}
Therefore, by~\eqref{BB}, we have
\begin{equation}\label{forG6}
  \begin{split}
     I_2&\lesssim \int_{|x_{d+1}|< 1/(N m_l')}\frac{{\rm d}x_{d+1}}{|x_{d+1}|}\int_{\T^d}
     |D_{\bm{M}^{(d)}}(\bm{x}^{d}-h)-D_{\bm{M}^{(d)}}(\bm{x}^{d})|{\rm d}\bm{x}^d  \\
&\lesssim  \frac{N_l|m_{l}^{(d)}|}{N m_l'}\Vert D_{\bm{M}^{(d)}}\Vert_{L_1(\T^{d})}\lesssim \Vert D_{\bm{M}^{(d)}}\Vert_{L_1(\T^{d})}.
   \end{split}
\end{equation}
Combining~\eqref{G2}, \eqref{forG2}, and~\eqref{forG6}, we obtain
\begin{equation}\label{eqGGGGGGGGGGG}
  \Vert V_l \Vert_{L_1(\T^{d+1})}\lesssim (\log N+1)\Vert D_{\bm{M}^{(d)}}\Vert_{L_1(\T^{d})}
\end{equation}
for each $l=1,\dots,d$.

Finally, combining \eqref{G1} and \eqref{eqGGGGGGGGGGG}, we get~\eqref{G}.

\end{proof}

Everywhere below, we denote
$
\bm{n}_j^s=(n_{j1},\dots,n_{js})\in \R^s,
$
$
\bm{m}_j^{(s)}=(m_{j1}^{(s)},\dots,m_{js}^{(s)})\in \R^s,
$
$$
\bm{M}_j^{(s)}= \begin{pmatrix}
n_{j1} & 0 & 0 & \cdots & 0 \\
n_{j2} & m_{j1}^{(1)} & 0 & \cdots & 0 \\
n_{j3} & m_{j1}^{(2)} & m_{j2}^{(2)} & \cdots & 0 \\
\vdots & \vdots & \vdots & & \vdots \\
n_{js} & m_{j1}^{(s-1)} & m_{j2}^{(s-1)} & \cdots & m_{js-1}^{(s-1)}
\end{pmatrix},
$$
$$
\bm{\L}_j^s=(\L_{j1},\dots,\L_{js})\,:\, \R^{s-1}_+\mapsto \R^{s},
$$
where
$
\L_{j1}=n_{j1}
$
and
%for $s=2,\dots,d$
$
  \L_{js}=\L_{js}(\bm{\xi}^{s-1}):=n_{js}-(\bm{m}_j^{(s-1)},\bm{\xi}^{s-1})$ for $s=2,3\dots$.

\begin{lemma}\label{lexs}
Let $\b\in\R$, $\bm{\a}^d=(\a_1,\dots,\a_d)\in \R^d$, and
$$
T_{\bm{M}^{(d)}}(\bm{x}^d)=\sum_{\bm{k}^d=0}^{[\bm{\L}^d]}a_{\bm{k}^d} e^{i(\bm{k}^d,\,\bm{x}^d)},\quad a_{\bm{k}^d}\in \C.
$$
Then
\begin{equation}\label{xs1}
  \sum_{\underset{\b \preccurlyeq ({\bm{\a}^{d}},\,\bm{k}^{d})}{{\bm{k}^d=0}}}^{[\bm{\L}^d]}a_{\bm{k}^d} e^{i(\bm{k}^d,\,\bm{x}^d)}
=\sum_{j=1}^R \e_j T_{\bm{M}_j^{(d)}}(\bm{x}^d),
\end{equation}
where
$R\le C(d)$, $\e_j\in\{-1,1\}$, and
$$
T_{\bm{M}_j^{(d)}}(\bm{x}^d)=\sum_{\bm{k}^d=0}^{[\bm{\L}_j^d]}a_{\bm{k}^d}e^{i(\bm{k}^d,\,\bm{x}^d)}
$$
are such that
$\deg_l T_{\bm{M}_j^{(d)}}\le \deg_l T_{\bm{M}^{(d)}}$ for all $j=1,\dots,R$ and $l=1,\dots,d$.
\end{lemma}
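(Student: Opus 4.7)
The plan is to induct on the dimension $d$, peeling off one coordinate at a time and using identity~\eqref{equ} to replace a constrained one-dimensional sum by a signed combination of unconstrained ones. For the base case $d=1$, the inequality $\b\preccurlyeq\a_1 k_1$ either places no restriction (when $\a_1=0$), forces $k_1\ge c$ for an integer $c$ determined by $\preccurlyeq$ (when $\a_1>0$), or forces $k_1\le c$ (when $\a_1<0$); in the first active case \eqref{equ} gives $\sum_{k_1=c}^{[\L_1]}=\sum_{k_1=0}^{[\L_1]}-\sum_{k_1=0}^{c-1}$ with $n_{j,1}\le [\L_1]$, and the remaining cases are simpler, so $R\le 2$ suffices.

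For the inductive step I would isolate the innermost variable by writing the constraint as $\b-\a_d k_d\preccurlyeq (\bm\a^{d-1},\bm{k}^{d-1})$ and split on the sign of $\a_d$. If $\a_d=0$ the constraint depends only on $\bm{k}^{d-1}$, so the inductive hypothesis applied to the outer $(d-1)$-dimensional sum (treating the inner $k_d$-summation as a $\bm{k}^{d-1}$-dependent coefficient) decomposes the outer index set into at most $C(d-1)$ signed nested polytopes, and each of them lifts trivially to a $d$-dimensional $T_{\bm{M}_j^{(d)}}$ retaining the original $\L_d$. If $\a_d>0$ (the sign $\a_d<0$ being symmetric), the constraint becomes $k_d\succcurlyeq g(\bm{k}^{d-1})$ with the affine function $g(\bm{k}^{d-1}):=(\b-(\bm\a^{d-1},\bm{k}^{d-1}))/\a_d$; applying \eqref{equ} to the inner sum on the valid sub-region $\{[g]\le [\L_d]\}$ produces
$$
\sum_{\b\preccurlyeq(\bm\a^d,\bm{k}^d)}a_{\bm{k}^d}e^{i(\bm{k}^d,\bm{x}^d)}
=\sum_{\substack{\bm{k}^{d-1}\in P(\bm{\L}^{d-1})\\ [g]\le [\L_d]}}\sum_{k_d=0}^{[\L_d]}a_{\bm{k}^d}e^{i(\bm{k}^d,\bm{x}^d)}
-\sum_{\substack{\bm{k}^{d-1}\in P(\bm{\L}^{d-1})\\ [g]\le [\L_d]}}\sum_{k_d=0}^{[g]-1}a_{\bm{k}^d}e^{i(\bm{k}^d,\bm{x}^d)},
$$
the convention $\sum_{0}^{-1}(\cdot)=0$ handling $[g]\le 0$ in the second sum while the complementary region $[g]>[\L_d]$ contributes nothing to the original sum.

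Each of the two outer sums is a constrained $(d-1)$-dimensional problem, so the inductive hypothesis yields at most $2C(d-1)$ signed nested polytopes $P(\bm{\L}_j^{d-1})$ in total; lifting back to $d$ dimensions produces the required $T_{\bm{M}_j^{(d)}}$, with the last row of $\bm{M}_j^{(d)}$ equal to that of $\bm{M}^{(d)}$ for the pieces arising from $\sum_{k_d=0}^{[\L_d]}$, and equal to the coefficients of $g-1$ (namely $\b/\a_d-1$ and $\a_l/\a_d$) for the pieces arising from $\sum_{k_d=0}^{[g]-1}$. The bound $\deg_l T_{\bm{M}_j^{(d)}}\le \deg_l T_{\bm{M}^{(d)}}$ is inherited from the inductive step for $l<d$; for $l=d$ it is automatic for the first type of pieces and follows from $[g]-1<[\L_d]$ on the valid sub-region for the second type. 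The recursion $C(d)\le 2C(d-1)$ with $C(1)\le 2$ then gives $C(d)\le 2^d$, which is admissible.

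The main obstacle will be the clean verification that the derived condition $[g(\bm{k}^{d-1})]\le [\L_d(\bm{k}^{d-1})]$ on integer $\bm{k}^{d-1}$ is itself of the form $\b^*\preccurlyeq^*(\bm\a^*,\bm{k}^{d-1})$ required by the inductive hypothesis, since the brackets depend on fractional parts of the real-valued affine functions $g$ and $\L_d$. I expect this to be circumvented by splitting $\{\bm{k}^{d-1}:g\le \L_d\}$ according to whether $\L_d-g\ge 1$ (always valid), $\L_d-g<0$ (always invalid), or $0\le \L_d-g<1$ (a transition strip requiring a sub-case analysis on fractional parts whose cost is bounded by a dimension-dependent constant), inflating $C(d)$ only by an absolute factor and not affecting the degree bounds.
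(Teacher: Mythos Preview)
Your inductive scheme is essentially Fourier--Motzkin elimination carried out one variable at a time, which is precisely the paper's route; the paper just invokes Fourier--Motzkin once on the whole system rather than phrasing it as an induction. The substantive difference is that the paper begins by removing all brackets, using the observation that for $n\in\Z$ one has $n\le\lfloor x\rfloor\Leftrightarrow n\le x$ and $n\le\lceil x\rceil\Leftrightarrow n<x+1$. This converts the original system $0\le k_s\le[\L_s(\bm k^{s-1})]$, together with the new constraint $\b\preccurlyeq(\bm\a^d,\bm k^d)$, into a purely linear system in integer $\bm k^d$; every constraint produced by elimination is then automatically of the form $\b^*\preccurlyeq(\bm\a^*,\bm k^{d-1})$, and~\eqref{equ} finishes.

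That observation is exactly what dissolves the obstacle you flag. With brackets removed, the outer condition you need after peeling off $k_d$ is the linear inequality $g(\bm k^{d-1})\preccurlyeq\L_d(\bm k^{d-1})$, not $[g]\le[\L_d]$. A short check shows that on the region $g\le\L_d$ the identity $\sum_{k_d=[g]}^{[\L_d]}=\sum_{k_d=0}^{[\L_d]}-\sum_{k_d=0}^{[g]-1}$ remains valid even at those $\bm k^{d-1}$ where no integer lies in $[g,\L_d]$: in that situation $g$ is non-integer and $[g]-1=\lfloor g\rfloor=\lfloor\L_d\rfloor=[\L_d]$, so both sides vanish. Hence the induction goes through with a single linear outer constraint and your recursion $C(d)\le 2C(d-1)$ stands. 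Your proposed workaround via the transition strip $0\le\L_d-g<1$ is therefore unnecessary; and as stated it would not actually work, because within that strip the condition $[g]\le[\L_d]$ reads $\{\L_d\}\le\L_d-g$, a condition on the fractional part of an affine function with possibly irrational coefficients, which is not a finite union of linear conditions on $\bm k^{d-1}$.
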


\begin{proof}
The system
\begin{equation*}
\left\{
  \begin{array}{ll}
    0  \le  k_1\le [\L_1], \\
    0  \le  k_s\le [\L_s(\bm{k}^{s-1})], & \hbox{$s=2,\dots,d,$}\\
    \b \preccurlyeq ({\bm{\a}^{d}},\,\bm{k}^{d})
  \end{array}
\right.
\end{equation*}
implies that
\begin{equation*}
\left\{
  \begin{array}{ll}
    0  \le  k_1\le \L_1\quad (\text{or}\quad 0  \le  k_1< \L_1+1), \\
    0  \le  k_s\le \L_s(\bm{k}^{s-1}) \quad (\text{or}\quad 0  \le  k_s< \L_s(\bm{k}^{s-1})+1), & \hbox{$s=2,\dots,d,$}\\
    \b \preccurlyeq ({\bm{\a}^{d}},\,\bm{k}^{d}).
  \end{array}
\right.
\end{equation*}
To see this, one may use the fact that for $n\in \Z$ the inequality $n\le \lfloor x \rfloor$ is equivalent to $n\le x$ and the inequality $n\le \lceil x \rceil$ is equivalent to $n<x+1$.

Next, by the Fourier–Motzkin elimination method (see~\cite[Ch. 12]{Sc} and~\cite{Sch}), the above system can be rewritten as a union of $r$ ($r\le C(d)$) systems of the following form
\begin{equation*}
  \left\{
  \begin{array}{ll}
    \tilde{\L}_{j1} \preccurlyeq  k_1 \preccurlyeq \tilde{\tilde{\L}}_{j1}, \\
    \tilde{\L}_{js}(\bm{k}^{s-1}) \preccurlyeq  k_s \preccurlyeq \tilde{\tilde{\L}}_{js}(\bm{k}^{s-1}), & \hbox{$s=2,\dots,d,$}
  \end{array}
\right.
\end{equation*}
where $\tilde{\L}_{js}$ and $\tilde{\tilde{\L}}_{js}$, $j=1,\dots,r$, have the form~\eqref{FORML}.
Therefore, one has
\begin{equation}\label{sumr}
    \sum_{\underset{\b \preccurlyeq ({\bm{\a}^{d}},\,\bm{k}^{d})}{{\bm{k}^d=0}}}^{[\bm{\L}^d]}a_{\bm{k}^d} e^{i(\bm{k}^d,\,\bm{x}^d)}
=\sum_{j=1}^r \sum_{\tilde{\L}_{j1} \preccurlyeq  k_1 \preccurlyeq \tilde{\tilde{\L}}_{j1}}\dots\sum_{\tilde{\L}_{js}(\bm{k}^{s-1}) \preccurlyeq  k_s \preccurlyeq \tilde{\tilde{\L}}_{js}(\bm{k}^{s-1})}a_{\bm{k}^d} e^{i(\bm{k}^d,\,\bm{x}^d)}.
\end{equation}
Now,~\eqref{sumr} and  equality~\eqref{equ}
imply~\eqref{xs1}.
\end{proof}

\begin{lemma}\label{lemod}
%Let $\L_{d+1}(\bm{k}^d)\ge 0$ for all $\bm{k}^d\in \Z_+^d$ satisfying to the system~\eqref{sys}.
One has
\begin{equation}\label{mod0}
\Bigg\Vert\sum_{\underset{\L_{d+1}(\bm{k}^d)\in\Z_+}{\bm{k}^d=0}}^{[\bm{\L}^d]}
e^{i(\bm{k}^d,\,\bm{x}^d)}\Bigg\Vert_{L_1(\T^{d})}\le
\sum_{j=1}^R \Vert
D_{\bm{{M}}_j^{(d)}}\Vert_{L_1(\T^{d})},
\end{equation}
where
$R\le C_1(d)$
and the matrices $\bm{{M}}_j^{(d)}$, $j=1,\dots,R$, are such that
\begin{equation}\label{eqDEG}
  \deg_l D_{\bm{M}_j^{(d)}}\le C_2(d)\deg_l D_{\bm{M}^{(d)}}\quad \text{for all}\quad l=1,\dots,d\quad \text{and}\quad j=1,\dots,R.
\end{equation}
\end{lemma}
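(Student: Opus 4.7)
My plan is to rewrite the constraint $\L_{d+1}(\bm{k}^d)\in\Z_+$ as a conjunction of conditions to which the Fourier--Motzkin-style reduction packaged in Lemma~\ref{lexs} directly applies. The constraint decomposes naturally into the non-negativity $\L_{d+1}(\bm{k}^d)\ge 0$ together with the integrality $\L_{d+1}(\bm{k}^d)\in\Z$, and I would treat these in turn.

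The non-negativity part is exactly the linear inequality $(\bm{m}^{(d)},\bm{k}^d)\le n_{d+1}$, which fits the hypothesis $\b\preccurlyeq(\bm{\a}^d,\bm{k}^d)$ of Lemma~\ref{lexs} with $\b=-n_{d+1}$, $\bm{\a}^d=-\bm{m}^{(d)}$, $a_{\bm{k}^d}\equiv 1$, and $\preccurlyeq$ interpreted as $\le$. A direct application of Lemma~\ref{lexs} expresses the corresponding restricted sum as a signed combination of at most $C(d)$ Dirichlet-type kernels $D_{\bm{M}_j^{(d)}}$ with $\deg_l D_{\bm{M}_j^{(d)}}\le\deg_l D_{\bm{M}^{(d)}}$.

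To incorporate the integrality condition, I would introduce an auxiliary non-negative integer variable $k_{d+1}:=\L_{d+1}(\bm{k}^d)$ and recast the sum as a $(d+1)$-fold sum over $(\bm{k}^d,k_{d+1})$ constrained by the linear equality $(\bm{m}^{(d)},\bm{k}^d)+k_{d+1}=n_{d+1}$. Written as the pair of opposite inequalities $k_{d+1}\preccurlyeq\L_{d+1}(\bm{k}^d)\preccurlyeq k_{d+1}$, this equality is again of the form treated by Lemma~\ref{lexs}. Reordering coordinates so that $m_d^{(d)}\ne 0$, solving the equality for $k_d$, and rescaling that coordinate to clear the denominator $m_d^{(d)}$ produces a triangular system in the remaining $d$ variables of the same form as~\eqref{sys}. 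The rescaling is the source of the factor $C_2(d)$ in~\eqref{eqDEG}. A second application of Lemma~\ref{lexs} then yields the full decomposition as a signed sum of at most $C_1(d)$ Dirichlet-type kernels, and the triangle inequality gives~\eqref{mod0}.

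The principal difficulty is to verify that both $C_1(d)$ and $C_2(d)$ depend only on $d$. For $C_1(d)$, the point is to show that the Fourier--Motzkin elimination applied to the augmented system produces only $O_d(1)$ triangular subsystems; this should follow from the triangular structure of $\bm{M}^{(d)}$ together with the combinatorial bound in Lemma~\ref{lexs}. For $C_2(d)$, the rescaling step must enlarge the summation ranges by at most a uniformly bounded multiplicative factor, which I expect to derive from the estimate $|m_l^{(d)}|\le 2N$ established in the proof of Lemma~\ref{leG} and the triangular placement of the entries of $\bm{M}^{(d)}$.
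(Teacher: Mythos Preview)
Your plan handles the inequality part $\L_{d+1}(\bm{k}^d)\ge 0$ correctly via Lemma~\ref{lexs}, but the treatment of the integrality condition $\L_{d+1}(\bm{k}^d)\in\Z$ has a genuine gap. That condition is an \emph{arithmetic} (congruence-type) constraint, not a linear inequality, and Fourier--Motzkin elimination --- which is what Lemma~\ref{lexs} packages --- does not apply to it. Concretely, your step ``solving the equality for $k_d$, and rescaling that coordinate to clear the denominator $m_d^{(d)}$'' presupposes that $m_d^{(d)}$ is rational; nothing in the hypotheses guarantees this, and when $m_d^{(d)}\notin\Q$ the solution set of $\L_{d+1}(\bm{k}^d)\in\Z$ over $\Z_+^d$ can be empty, a single point, or an affine sublattice whose description is not a finite union of triangular systems of the form~\eqref{sys}. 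Even in the rational case, the ``denominator'' to be cleared is the denominator $q$ of $m_d^{(d)}$, which is \emph{not} bounded in terms of $d$; so neither $C_1(d)$ nor $C_2(d)$ would come out dimension-only from your argument. The bound $|m_l^{(d)}|\le 2N$ from Lemma~\ref{leG} controls the \emph{size} of the coefficients, not their arithmetic (denominators), and is therefore of no help here.

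The paper's proof reflects exactly this difficulty: it argues by induction on $d$ and performs a detailed case analysis according to whether $n_{d+1}$ and the $m_l^{(d)}$ are rational or irrational. In the irrational cases it shows that the constraint either kills the sum outright or forces an affine relation among the $k_l$ that reduces the effective dimension (so the induction hypothesis applies, after an appeal to Lemma~\ref{lexs} to restore triangular form). In the fully rational case it solves the congruence $p_d k_d\equiv p_{d+1}-(\bm{p}^{d-1},\bm{k}^{d-1})\pmod q$ explicitly, obtaining $k_d=b-(\bm{a}^{d-1},\bm{k}^{d-1})+r\nu$ with $\nu\in\Z$; the degree control~\eqref{eqDEG} is then secured not by a rescaling but by a separate integer shift of $b/r$ and the $a_l/r$ into a window of width $1$ near $dN_d$. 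None of these mechanisms is visible in your outline, and without them the argument does not close.
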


\begin{proof}
Let us prove the following equality, from which \eqref{mod0} can be easily derived,
\begin{equation}\label{mod}
S:=\sum_{\underset{\L_{d+1}(\bm{k}^d)\in\Z_+}{\bm{k}^d=0}}^{[\bm{\L}^d]}
e^{i(\bm{k}^d,\,\bm{x}^d)}=
\sum_{j=1}^R \e_j
e^{i(\bm{\b}_j^d,\,\bm{x}^d)}
D_{\bm{{M}}_j^{(d)}}
(\bm{r}_j^d\bm{x}^d+\bm{\a}_{j}^{(1)} x_2+\dots+\bm{\a}_{j}^{(d-1)} x_d),
\end{equation}
where
$\e_j\in \{-1,0,1\}$, $\bm{\b}_{j}^{d}\in \Z^d$, $\bm{r}_j^d\in \N^d$, $\bm{r}_j^d\bm{x}^d=(r_{j1}x_1,\dots,r_{jd}x_d)$,
and $\bm{\a}_j^{(l)}=(\a_{j1}^{(l)},\dots,\a_{jl}^{(l)},0,\dots,0)\in \R^d$ for all $j=1,\dots,R$ and $l=1,\dots,d-1$.
%and
%$
%\bm{\tilde{M}}_j^{(d-1)}
%$
%is some matrix.

%Let also
%$$
%\bm{\b}_j^d=(\b_{j1},\dots,\b_{jd}),
%$$
%$$
%\bm{\a}_{j}^{(l)}=(\a_{j1}^{(l)},\dots,\a_{jl}^{(l)}),\quad l=1,\dots,d.
%$$

We  prove \eqref{mod} by using induction.
First, let $d=1$ and
$$
S=\sum_{\underset{n_2-m_1 k\in \Z_+}{k=0}}^{n_1} e^{ikx}.
$$
%where $n_2-m_1 k\ge 0$ for all $k\in [0,n_1]$.
Consider four cases for the parameters $n_2$ and $m_1$:

1) If $n_2\not\in \Q$ and $m_1\in \Q$, then it is obvious that $S=0$.

2) If $n_2 \not\in \Q$ and $m_1 \not\in \Q$, then
there are only two possibilities: $S=0$ or there exists at most one integer
$\b\in [0,n_1]$ such that $n_2-\b m_1\in \Z_+$. Otherwise, if there existed also an integer $\b'\in [0,n_1]$ such that $n_2-\b'm_1\in \Z_+$, then we would have that $(\b'-\b)m_1\in \Z_+$, which is impossible. Therefore, we have
$$
S=\left\{
    \begin{array}{ll}
      e^{i \b x}, & \hbox{if there exists $\b\in [0,n_1]\cap \Z$:  $n_2-\b m_1\in \Z_+$,} \\
      0, & \hbox{otherwise.}
    \end{array}
  \right.
$$

3) If $n_2 \in \Q$ and $m_1 \not\in \Q$, then it is easy to see that
$$
S=\left\{
    \begin{array}{ll}
      1, & \hbox{$n_2\in \Z_+$,} \\
      0, & \hbox{otherwise.}
    \end{array}
  \right.
$$

4) Finally, let $n_2, m_1\in \Q$ be such that $m_1={p_1}/{q}$ and $n_2={p_2}/{q}$, where $q\in \N$. Then the condition $n_2-m_1 k\in \Z$ is equivalent to
\begin{equation}\label{pq}
  p_1 k\equiv p_2\pmod q.
\end{equation}

Let $c=\gcd(p_1,q)$ (the greatest common divisor). If $c\neq 1$ and $c \nmid p_2$ ($c$ does not divide $p_2$), then \eqref{pq} does not have any solution and one can put $S=0$. If $c=1$ or $c \mid p_2$ ($c$ divides $p_2$), then the solution of \eqref{pq} can be represented as
$$
k=\b+r\nu,\quad 0\le \b<r,\quad \quad \nu\in\Z.
$$
%where
%$$
%\b=\frac{p_2}{c}\(\frac{p_1}{c}\)^{\vp(\frac qc)-1},\quad r=\frac qc,
%$$
%and $\vp$ is Euler's function.
From the inequality $0\le k\le n_1$, we get that $-\lfloor A\rfloor\le \nu\le \lfloor B\rfloor$, where $A=\b/ r$ and $B={(n_1-\b)}/{r}$. Therefore,
\begin{equation*}
  \begin{split}
     S=\sum_{\nu=0}^{\lfloor B\rfloor}e^{i(\b+r\nu)x}=e^{i\b x}\sum_{\nu=0}^{\lfloor B\rfloor}e^{ir\nu x},
   \end{split}
\end{equation*}
%It is also obvious that $B\le n_1$.
which implies~\eqref{mod} in the case $d=1$.

Now, let us fix $d$ and assume that~\eqref{mod}  holds in any dimension less than $d$.
As in the above case $d=1$,  we consider several cases for the parameters in the following condition:
\begin{equation}\label{co}
  \L_{d+1}(\bm{k}^d)=n_{d+1}-\sum_{l=1}^d m_lk_l\in \Z_+.
\end{equation}

1) If $n_{d+1}\not\in\Q$ and $m_l\in\Q$, $l=1,\dots,d$, then condition~\eqref{co} implies that $S=0$.

2) Let $n_{d+1}\not\in\Q$ and $m_l\not\in\Q$, $l=1,\dots,d$. In this case we have two possibilities: $S=0$ or there exists a non-zero vector $\bm k_1=(k_{1,1},\dots,k_{1,d})\in\Z_+^d$ such that
\begin{equation}\label{++1}
  n_{d+1}-\sum_{l=1}^d m_l k_{1,l}=N_1\in \Z_+.
\end{equation}
In the last case, supposing that $k_{1,d}\neq 0$, we obtain from~\eqref{++1} that
\begin{equation}\label{++2}
  m_d=a_1 n_{d+1}+\sum_{l=1}^{d-1}b_{1,l}m_l+c_1,
\end{equation}
where $a_1=1/k_{1,d}$ and $c_1, b_{1,l}\in \Q$, $l=1,\dots,d-1$. Then we derive from~\eqref{++2} that in the considered case, \eqref{co} is equivalent to
\begin{equation*}
%\label{++3}
  (1-a_1k_d)n_{d+1}-\sum_{l=1}^{d-1}m_l (k_l+b_{1,l}k_d)-c_1k_d\in \Z_+.
\end{equation*}
We again have two possibilities: $S=0$ or there exists a non-zero vector $\bm k_2=(k_{2,1},\dots,k_{2,d})\in \Z_+^d$ such that
\begin{equation}\label{++3}
  (1-a_1k_d)n_{d+1}-\sum_{l=1}^{d-1}m_l (k_{2,l}+b_{1,l}k_{2,d})-c_1k_{2,d}=N_2\in \Z_+.
\end{equation}
Supposing that $k_{2, d-1}+b_{1, d-1}k_{2, d-1}\neq 0$, we derive from~\eqref{++3} that
\begin{equation*}
%\label{++4}
  m_{d-1}=a_2 n_{d+1}+\sum_{l=1}^{d-2}b_{2,l}m_l+c_2,
\end{equation*}
where $0\neq a_2\in \Q$ and $c_2,b_{2,l}\in \Q$, $l=1,\dots,d-2$.

Repeating this procedure ($d$ times if necessary), in the final step we again derive that there are two possibilities: $S=0$ or there exist $\nu_j, \mu_j\in \Q$ such that
\begin{equation}\label{equalities_munu}
  m_j=\nu_j+n_{d+1}\mu_j,\quad j=1,\dots,d.
\end{equation}
It is clear that this representation is unique.

Next, from~\eqref{co} and~\eqref{equalities_munu}, we get
$$
n_{d+1}\(1-\sum_{l=1}^d\mu_l k_l\)-\sum_{l=1}^d \nu_l k_l\in\Z_+.
$$
In view of $n_{d+1}\not\in \Q$, we derive that this condition is possible only if
\begin{equation}\label{000}
  \sum_{l=1}^d \mu_l k_l =1\quad\text{and}\quad \sum_{l=1}^d \nu_l k_l\in \Z_+.
\end{equation}
It is clear that $\mu_d\neq 0$. Thus, the first formula in~\eqref{000} yields that
$k_d=\b-(\bm{\a}^{d-1},\bm{k}^{d-1})\in \Z_+$, where $\b={1}/{\mu_d}$ and $\a_l={\mu_l}/{\mu_d}$, $l=1,\dots,d-1$. Combining this with the second formula from~\eqref{000}, we get
\begin{equation}\label{SSS}
  \begin{split}
     S=\sum_{\underset{\b-(\bm{\a}^{d-1},\bm{k}^{d-1})\le [\La_{d}(\bm{k}^{d-1})],\,\widetilde{\b}-(\bm{\widetilde{\a}}^{d-1},\bm{k}^{d-1})\in \Z_+}{\bm{k}^{d-1}=0}}^{[\bm{\La}^{d-1}]} e^{i(\bm{k}^{d-1},\,\bm{x}^{d-1})}e^{i(\b-(\bm{\a}^{d-1},\,\bm{k}^{d-1}))x_d}.
   \end{split}
\end{equation}
%Below let us consider only the case $[\La_{d-1}(\bm{k}^{d-2})]=\lfloor \La_{d-1}(\bm{k}^{d-2}) \rfloor$. The case when instead of $\lfloor \cdot \rfloor$ one has $\lceil\cdot\rceil$ can be treated by the scheme presented below.
By Lemma~\ref{lexs}, \eqref{SSS} can be rewritten as
\begin{equation}\label{SSS1}
   S=e^{i\b x_d}\sum_{j=1}^{R'}\e_j'\sum_{\underset{\b-(\bm{\a}^{d-1},\bm{k}^{d-1})\in \Z_+}{\bm{k}^{d-1}=0}}^{[\widetilde{\bm{\La}}_j^{d-1}]} e^{i(\bm{k}^{d-1},\,\bm{x}^{d-1}-\bm{\a}^{d-1}x_d)},
\end{equation}
where $\e_j'\in \{-1,1\}$ and $R'\le C(d)$.
Thus, applying the induction hypothesis to each sum in~\eqref{SSS1}, we obtain~\eqref{mod}.

3) Let us consider the case $n_{d+1}\in\Q$, $m_{l_1},\dots m_{l_t}\not\in\Q$ for some $t\le d$ and $1\le l_1<\dots<l_t\le d$, and $m_k\in \Q$ for $k\neq l_j$, $j=1,\dots,t$. Suppose for simplicity that $m_t,\dots,m_d\not\in\Q$. In this case, the condition $\L_{d+1}(\bm{k}^{d})\in\Z_+$ implies
\begin{equation}\label{3+1}
  \sum_{l=t}^d m_l k_l\in \Q.
\end{equation}
It is clear that~\eqref{3+1} holds for $k_t=\dots=k_d=0$. If there are no other admissible $k_t,\dots, k_d$ such that~\eqref{3+1} is fulfilled, then~\eqref{co} is equivalent to the following condition:
$$
n_{d+1}-\sum_{l=1}^{t-1}m_l k_l\in \Z_+\quad\text{and}\quad k_t=\dots=k_d=0.
$$
If $t=1$, then  this condition implies that $S=1$ if $n_{d+1}\in \Z_+$ and $S=0$ otherwise. In the case $t>1$, the above condition implies that
\begin{equation}\label{3+2}
  S=\sum_{\underset{n_{d+1}-(\bm m^{t-1}, \bm k^{t-1})\in\Z_+}{\bm{k}^{t-1}=0}}^{[\bm{\L}^{t-1}]}
e^{i(\bm{k}^{t-1},\,\bm{x}^{t-1})}.
\end{equation}
Thus, applying the induction hypothesis to~\eqref{3+2}, we derive~\eqref{mod}. Note that we have the same conclusion in the case $t=d$.

Let us suppose that $t<d$ and there exists a non-zero vector $(k_{1,t},\dots,k_{1,d})\in \Z_+^{d-t+1}$ such that~\eqref{3+1} is fulfilled. Let, for example, $k_{1,d}\neq 0$. Then, from~\eqref{3+1} it follows that
\begin{equation*}
%\label{3+3}
m_d=a_{1,d-1}m_{d-1}+a_{1,d-2}m_{d-2}+\dots+a_{1,t}m_t+c_1,
\end{equation*}
where $c_1, a_{1,l}\in \Q$, $l=t,\dots,d-1$. Thus,~\eqref{3+1} can be rewritten in the following form:
\begin{equation}\label{3+4}
\sum_{l=t}^{d-1} m_l (k_l+a_{1,l}k_d)\in \Q.
\end{equation}

As above, let us consider two cases for~\eqref{3+4}. First, let~\eqref{3+4} holds only if
\begin{equation*}
%\label{3+5}
  k_l+a_{1,l} k_d=0,\quad l=t,\dots,d-1.
\end{equation*}
It is clear that for some $l_0\in \{t,\dots,d-1\}$ one has $a_{1,l_0}\neq 0$. Let, for simplicity, $l_0=d-1$. Then we derive
that $k_d=a_{1,d-1}^{-1}=\g k_{d-1}$. Thus, in this case,~\eqref{co} is equivalent to
$$
n_{d+1}-\sum_{l=1}^{d-1}m_l k_l-m_d\g k_{d-1}\in \Z_+.
$$
Hence,
\begin{equation}\label{3+6}
S=\sum_{\underset{\g k_{d-1}\le [\L_d(\bm k^{d-1})],\,  n_{d+1}-(\bm m^{d-1}, \bm k^{d-1})-m_d\g k_{d-1}\in\Z_+}{\bm{k}^{d-1}=0}}^{[\bm{\L}^{d-1}]}
e^{i(\bm{k}^{d-1},\,\bm{x}^{d-1})}e^{i\g k_{d-1}x_d}.
\end{equation}
Thus, applying Lemma~\ref{lexs} and the induction hypothesis to~\eqref{3+6}, we derive~\eqref{mod}.

Now, let us consider the case of an existing non-zero vector $(k_{2,t},\dots,k_{2,d})\in \Z_+^{d-t+1}$ such that
\begin{equation}\label{3+7}
  \sum_{l=t}^{d-1} m_l(k_{2,l}+a_{1,l}k_{2,d})\in \Q
\end{equation}
and for some $l_0\in \{t,\dots,d-1\}$
\begin{equation}\label{3+8}
  k_{2,l_0}+a_{1,l_0}k_{2,d}\neq 0.
\end{equation}
Let, for example,~\eqref{3+8} holds for $l_0=d-1$. In this case, combining~\eqref{3+7} and~\eqref{3+8}, we derive
\begin{equation*}
%\label{3+9}
  m_{d-1}=a_{2,d-1}m_{d-2}+\dots+a_{2,t}m_t+c_2,
\end{equation*}
where $c_2, a_{2,l}\in \Q$, $l=t,\dots,d-2$. Thus, \eqref{3+4} can be rewritten in the following equivalent form:
\begin{equation*}
%\label{3+10}
  \sum_{l=t}^{d-2} m_l\(k_l+a_{2,l}k_{d-1}+(a_{2,l}a_{1,d-1}+a_{1,l})k_d\)\in \Q.
\end{equation*}

It remains to apply the previous arguments a necessary number of times.

Other cases for  $m_{l_1},\dots m_{l_t}\not\in \Q$ can be considered in a similar way.

4) The case $n_{d+1}\not\in\Q$, $m_{l_1},\dots m_{l_t}\not\in\Q$ for some $t\le d$ and $1\le l_1<\dots<l_t\le d$, and $m_k\in \Q$ for $k\neq l_j$, $j=1,\dots,t$, can be considered by analogy with the cases 2) and 3).

5) It remains to consider the case $n_{d+1}\in\Q$ and $m_l\in \Q$, $l=1,\dots,d$. We can suppose that $n_{d+1}={p_{d+1}}/{q}$ and $m_l={p_l}/{q}$, $l=1,\dots,d$, where $q\in \N$.

Denote $c_d=\gcd(p_d,q)$. If $c_d=1$ or $c_d\, |\, p_{d+1}-(\bm{p}^{d-1},\bm{k}^{d-1})$,
then, by the well-known formula for the Diophantine equations, the condition $\L_{d+1}(\bm{k}^d)\in \Z$ implies that
\begin{equation}\label{modXXX}
  k_d=b_0-(\bm{a}^{d-1},\,\bm{k}^{d-1})+r\nu,\quad \nu\in\Z,
\end{equation}
where
$$
r=\frac q{c_d}\ge 1,\quad b_0=\frac{p_{d+1}}{c_d}\(\frac{p_d}{c_d}\)^{\vp(\frac q{c_d})-1},\quad a_l=\frac{p_l}{c_d}\(\frac{p_d}{c_d}\)^{\vp(\frac q{c_d})-1},\quad l=1,\dots,d-1,
$$
and $\vp$ is Euler's function.
%$$
%r=\frac q{c_d}\ge 1.
%$$
It is clear that one can rewrite \eqref{modXXX} such that
$$
k_d=b-(\bm{a}^{d-1},\,\bm{k}^{d-1})+r\nu,\quad \nu\in\Z,
$$
and
$$
(\bm{a}^{d-1},\,\bm{k}^{d-1})-b \ge 0
$$
%$$
%\L_d(\bm{k}^{d-1})+(\bm{a}^{d-1},\,\bm{k}^{d-1})-b\ge 0,
%$$
for all admissible $\bm{k}^d\in\Z_+^d$.

Since $0\le k_d\le \L_d(\bm{k}^{d-1})$, we get $$\frac1r\((\bm{a}^{d-1},\bm{k}^{d-1})-b\)\le \nu \le \frac1r\([\L_d(\bm{k}^{d-1})]+(\bm{a}^{d-1},\bm{k}^{d-1})-b\)$$ and, therefore, it follows that
\begin{equation}\label{SSS3}
  \begin{split}
     S&=\sum_{\underset{ (p_{d+1}-(\bm{p}^{d-1},\bm{k}^{d-1}))/c_d\in \Z_+  } {\bm{k}^{d-1}=0}} ^{[\bm{\L}^{d-1}]}  e^{i(\bm{k}^{d-1},\,\bm{x}^{d-1})}\sum_{\underset{k_d=(b-(\bm{a}^{d-1},\,\bm{k}^{d-1}))+r\nu}{k_d=0}}^{[\L_d(\bm{k}^{d-1})]} e^{ik_d x_d}\\
&=e^{i b x_d}\sum_{\underset{  (p_{d+1}-(\bm{p}^{d-1},\bm{k}^{d-1}))/c_d\in \Z_+ } {\bm{k}^{d-1}=0}} ^{[\bm{\L}^{d-1}]}  e^{i(\bm{k}^{d-1},\,\bm{x}^{d-1}-\bm{a}^{d-1}x_d)}\sum_{\nu=\lceil A(\bm{k}^{d-1})\rceil}^{[B(\bm{k}^{d-1})]} e^{ir\nu x_d},
   \end{split}
\end{equation}
where
$$
A(\bm{k}^{d-1})=\frac 1r \( (\bm{a}^{d-1},\,\bm{k}^{d-1})-b\)
$$
and
$$
[B(\bm{k}^{d-1})]=\left\{
    \begin{array}{ll}
      \left\lfloor \frac1r\(\L_d(\bm{k}^{d-1})+(\bm{a}^{d-1},\bm{k}^{d-1})-b\)\right\rfloor, & \hbox{$[\L_d(\bm{k}^{d-1})]=\lfloor \L_d(\bm{k}^{d-1})\rfloor$,} \\
\\
      \left\lceil \frac1r\(\L_d(\bm{k}^{d-1})+(\bm{a}^{d-1},\bm{k}^{d-1})-b+1\)\right\rceil-1, & \hbox{otherwise.}
    \end{array}
  \right.
$$

%Denote $N_d=\deg_d D_{\bm{M}^{(d-1)}}$.
Next, to ensure~\eqref{eqDEG} we choose $\tilde{b}\in \Z$ and $\bm{\tilde{a}}^{d-1}\in\Z^{d-1}$  such that
$$
dN_d\le \left|\frac{b}r-\tilde{b}\right|\le 1+dN_d
$$
and
$$
dN_d\le\left|\frac{a_l}r-{\tilde{a}}_l\right|\le 1+dN_d,\quad l=1,\dots,d-1,
$$
where $N_d=\deg_d D_{\bm{M}^{(d)}}$. Thus, we get
\begin{equation}\label{ZZZ}
\begin{split}
   \sum_{\nu=\lceil A(\bm{k}^{d-1})\rceil}^{[B(\bm{k}^{d-1})]}
   e^{ir\nu x_d}&=
   e^{ir(\tilde{b}-(\bm{\tilde{a}}^{d-1},\,\bm{k}^{d-1})) x_d}
   \sum_{\nu=\lceil A(\bm{k}^{d-1})\rceil}^{[B(\bm{k}^{d-1})]}
   e^{ir(\nu-(\tilde{b}-(\bm{\tilde{a}}^{d-1},\,\bm{k}^{d-1}))) x_d}\\
   &=e^{ir(\tilde{b}-(\bm{\tilde{a}}^{d-1},\,\bm{k}^{d-1})) x_d}\sum_{\nu=\lceil \tilde{A}(\bm{k}^{d-1})\rceil}^{[\tilde{B}(\bm{k}^{d-1})]} e^{ir\nu x_d},
\end{split}
\end{equation}
where
\begin{equation*}\label{ZZZ1}
  0\le \tilde{A}(\bm{k}^{d-1})\le \tilde{B}(\bm{k}^{d-1})\le 2dN_d.
\end{equation*}
Finally, combining~\eqref{SSS3} and \eqref{ZZZ}, and using~\eqref{equ} and the inductive hypothesis, we obtain~\eqref{mod}.

%Now, from~\eqref{mod} after changing of variables, we get
%\begin{equation}\label{XXXXX}
%\begin{split}
%  \bigg\Vert\sum_{\underset{\L_{d+1}(\bm{k}^d)\in\Z_+}{\bm{k}^d=0}}^{[\bm{\L}^d]}
%e^{i(\bm{k}^d,\bm{x}^d)}\bigg\Vert_{1}\le
%\sum_{j=1}^R \Vert
%e^{i(\bm{\g}_j^d,\bm{x}^d)}
%D_{\bm{n}_j^d, \bm{\tilde{M}}_j^{(d-1)}}
%(\bm{\e}_j' \bm{x}^d)\Vert_{1},
%\end{split}
%\end{equation}
%Now by using appropriate vectors $\bm{\g}_j^d$ it is obtain from~\eqref{XXXXX} formula~\eqref{mod0}.

%Taking into account that in the case when $\b-(\bm{\a}^{d-1},\bm{k}^{d-1})\in \Z_+$ the condition $\b-(\bm{\a}^{d-1},\bm{k}^{d-1})\le [\La_{d}(\bm{k}^{d-1})]$  is equivalent to $\b-(\bm{\a}^{d-1},\bm{k}^{d-1})\le \La_{d}(\bm{k}^{d-1})$ or
\end{proof}

\begin{lemma}\label{leD}
Let $r\in \N$ and $\L_{d+1}(\bm{k}^{d})\ge 0$. In terms of Lemma~\ref{lemod}, one has
\begin{equation}\label{D1}
\begin{split}
  &\left\Vert \sum_{\bm{k}^d=0}^{[\bm{\L}^d]}e^{i(\bm{k}^d,\bm{x}^d)}\langle \L_{d+1}(\bm{k}^{d}) \rangle^r \right\Vert_{L_1(\T^d)}\\
&\quad\quad\quad\lesssim \sum_{j=1}^R\Vert D_{\bm{M}_j^{(d)}}\Vert_{L_1(\T^d)}+\sum_{\nu=1}^r\binom{r}{\nu}\left\Vert \sum_{\bm{k}^d=0}^{[\bm{\L}^d]}e^{i(\bm{k}^d,\bm{x}^d)}\{ \L_{d+1}(\bm{k}^{d}) \}^\nu \right\Vert_{L_1(\T^d)}.
\end{split}
\end{equation}
\end{lemma}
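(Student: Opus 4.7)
The idea is to convert $\langle \L_{d+1}(\bm k^d)\rangle^r$ into a binomial polynomial in $\{\L_{d+1}(\bm k^d)\}$, isolating a discrepancy at the integer values of $\L_{d+1}(\bm k^d)$ that can be absorbed via Lemma~\ref{lemod}.

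First I would unfold the definition of $\langle\cdot\rangle$. If $\langle t\rangle=\{t\}$, there is nothing to do: the left-hand side of~\eqref{D1} coincides with the $\nu=r$ summand on the right. If instead $\langle t\rangle=t-\lceil t\rceil$, then for $t\notin\Z$ one has $\lceil t\rceil=\lfloor t\rfloor+1$, hence $\langle t\rangle=\{t\}-1$, while for $t\in\Z$ one has $\langle t\rangle=0$. Since $\{t\}=0$ whenever $t\in\Z$, both subcases are captured by the single identity
$$
\langle t\rangle^r=(\{t\}-1)^r-(-1)^r\chi_{\Z}(t),
$$
where $\chi_{\Z}$ is $1$ on the integers and $0$ otherwise.

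Next, setting $t=\L_{d+1}(\bm k^d)$ and expanding $(\{t\}-1)^r$ by the binomial theorem, I would obtain
$$
\sum_{\bm k^d=0}^{[\bm\L^d]} e^{i(\bm k^d,\bm x^d)}\langle \L_{d+1}(\bm k^d)\rangle^r
=\sum_{\nu=0}^{r}\binom{r}{\nu}(-1)^{r-\nu}\sum_{\bm k^d=0}^{[\bm\L^d]} e^{i(\bm k^d,\bm x^d)}\{\L_{d+1}(\bm k^d)\}^\nu
-(-1)^r\sum_{\underset{\L_{d+1}(\bm k^d)\in\Z_+}{\bm k^d=0}}^{[\bm\L^d]} e^{i(\bm k^d,\bm x^d)}.
$$
Taking $L_1(\T^d)$-norms and applying the triangle inequality then splits the estimate into three kinds of pieces: the last sum, which is controlled by $\sum_{j=1}^R\Vert D_{\bm M_j^{(d)}}\Vert_{L_1(\T^d)}$ via Lemma~\ref{lemod}; the $\nu=0$ term, which equals $D_{\bm M^{(d)}}(\bm x^d)$ and can be absorbed into the same sum after enlarging the collection $\{\bm M_j^{(d)}\}$ by $\bm M^{(d)}$ itself (which trivially satisfies~\eqref{eqDEG}); and the terms $\nu=1,\dots,r$, which are exactly the summands listed on the right of~\eqref{D1}.

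The main point is the identification of the residual: one must verify that the defect between $\langle t\rangle^r$ and the naive binomial expansion $(\{t\}-1)^r$ is supported precisely on $\{t\in\Z_+\}$ and is a constant multiple of its indicator. Once this is pinned down, Lemma~\ref{lemod} is tailored to absorb the defect, and the binomial expansion together with the triangle inequality completes the argument.
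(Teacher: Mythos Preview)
Your proposal is correct and follows essentially the same approach as the paper: the paper also disposes of the case $\langle\cdot\rangle=\{\cdot\}$ trivially, then for $\langle\cdot\rangle=\cdot-\lceil\cdot\rceil$ uses the identity $\langle t\rangle^r=(\{t\}-1)^r-(-1)^r\chi_{\Z}(t)$, expands the binomial, separates out the $\nu=0$ term $D_{\bm M^{(d)}}$, and applies Lemma~\ref{lemod} to the integer-constrained sum. Your remark that $D_{\bm M^{(d)}}$ can be absorbed into the collection $\{D_{\bm M_j^{(d)}}\}$ is exactly how the paper handles that term as well.
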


\begin{proof} Inequality~\eqref{D1} is obvious for $\langle x\rangle=\{x\}$. Let us consider the case $\langle x\rangle=x-\lceil x\rceil$.
Using  the equality
$$
\langle x\rangle=x-\lceil x\rceil=\left\{
    \begin{array}{ll}
      0, & \hbox{$x\in\Z$,} \\
      \{x\}-1, & \hbox{$x\not\in\Z$,}
    \end{array}
  \right.
$$
we derive
\begin{equation*}
  \begin{split}
     &\sum_{\bm{k}^d=0}^{[\bm{\L}^d]}e^{i(\bm{k}^d,\bm{x}^d)}\langle \L_{d+1}(\bm{k}^{d}) \rangle^r\\
&=\sum_{\bm{k}^d=0}^{[\bm{\L}^d]}e^{i(\bm{k}^d,\bm{x}^d)}\(\{\L_{d+1}(\bm{k}^{d})\}-1\)^r-
(-1)^r\sum_{\underset{\L_{d+1}(\bm{k}^d)\in\Z}{\bm{k}^d=0}}^{[\bm{\L}^d]}e^{i(\bm{k}^d,\bm{x}^d)}\\
&=(-1)^{r+1}\sum_{\underset{\L_{d+1}(\bm{k}^d)\in\Z}{\bm{k}^d=0}}^{[\bm{\L}^d]}e^{i(\bm{k}^d,\bm{x}^d)}
+\sum_{\bm{k}^d=0}^{[\bm{\L}^d]}e^{i(\bm{k}^d,\bm{x}^d)}\\
&\quad\quad\quad\quad\quad\quad\quad\quad\quad+\sum_{\nu=1}^r (-1)^{r-\nu} \binom{r}{\nu}\sum_{\bm{k}^d=0}^{[\bm{\L}^d]}e^{i(\bm{k}^d,\bm{x}^d)}\{ \L_{d+1}(\bm{k}^{d}) \}^\nu.
   \end{split}
\end{equation*}
Therefore,
\begin{equation}\label{DDDDDD1}
\begin{split}
  &\left\Vert \sum_{\bm{k}^d=0}^{[\bm{\L}^d]}e^{i(\bm{k}^d,\bm{x}^d)}\langle \L_{d+1}(\bm{k}^{d}) \rangle^r \right\Vert_{L_1(\T^d)}\le
\Bigg\Vert  \sum_{\underset{\L_{d+1}(\bm{k}^d)\in\Z}{\bm{k}^d=0}}^{[\bm{\L}^d]}e^{i(\bm{k}^d,\bm{x}^d)}\Bigg\Vert_{L_1(\T^d)}\\
&\quad\quad\quad+\Bigg\Vert  \sum_{\bm{k}^d=0}^{[\bm{\L}^d]}e^{i(\bm{k}^d,\bm{x}^d)}\Bigg\Vert_{L_1(\T^d)}+\sum_{\nu=1}^r\binom{r}{\nu}\left\Vert \sum_{\bm{k}^d=0}^{[\bm{\L}^d]}e^{i(\bm{k}^d,\bm{x}^d)}\{ \L_{d+1}(\bm{k}^{d}) \}^\nu \right\Vert_{L_1(\T^d)}.
\end{split}
\end{equation}
It remains to apply Lemma~\ref{lemod} to the first sum  in the right-hand side of~\eqref{DDDDDD1}.
\end{proof}

\begin{lemma}\label{leforF}
One has
\begin{equation*}
%\label{forF}
 \Vert F_{\bm{M}^{(d+1)}}\Vert_{L_1(\T^{d+1})}\lesssim \sum_{s=1}^\infty \frac{(2\pi)^s}{s!}\left\Vert \sum_{\bm{k}^d=0}^{[\bm{\L}^d]}e^{i(\bm{k}^d,\,\bm{x}^d)}\langle \L_{d+1}(\bm{k}^{d}) \rangle^s \right\Vert_{L_1(\T^d)}.
\end{equation*}
\end{lemma}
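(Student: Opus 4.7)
The plan is to separate the $\bm{k}^d$-dependence from the $x_{d+1}$-dependence inside $F_{\bm{M}^{(d+1)}}$ by Taylor-expanding the factor $e^{-i\langle \L_{d+1}(\bm{k}^d)\rangle x_{d+1}}-1$ at $x_{d+1}=0$. Writing
$$
e^{-i\langle\L_{d+1}(\bm{k}^d)\rangle x_{d+1}}-1=\sum_{s=1}^{\infty}\frac{(-ix_{d+1})^s}{s!}\langle\L_{d+1}(\bm{k}^d)\rangle^s,
$$
and observing that $|\langle\L_{d+1}(\bm{k}^d)\rangle|\le 1$ and $|x_{d+1}|\le\pi$ so that the series converges uniformly in $(\bm{x}^{d+1},\bm{k}^d)$, I would interchange this sum with the finite sum over $\bm{k}^d$ to obtain
$$
F_{\bm{M}^{(d+1)}}(\bm{x}^{d+1})=\frac{e^{i(n_{d+1}+1)x_{d+1}}}{e^{ix_{d+1}}-1}\sum_{s=1}^{\infty}\frac{(-ix_{d+1})^s}{s!}\sum_{\bm{k}^d=0}^{[\bm{\L}^d]}e^{i(\bm{k}^d,\,\bm{x}^d-\bm{m}^{(d)}x_{d+1})}\langle\L_{d+1}(\bm{k}^d)\rangle^s.
$$

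Next I would bound $|F_{\bm{M}^{(d+1)}}|$ termwise by the triangle inequality and integrate over $\T^{d+1}$ by Fubini, performing the $\bm{x}^d$-integration first. For any fixed $x_{d+1}$, the map $\bm{x}^d\mapsto \bm{x}^d-\bm{m}^{(d)}x_{d+1}$ is a toral translation, hence preserves the $L_1(\T^d)$-norm, so the inner integral equals
$$
\left\|\sum_{\bm{k}^d=0}^{[\bm{\L}^d]}e^{i(\bm{k}^d,\,\bm{x}^d)}\langle\L_{d+1}(\bm{k}^d)\rangle^s\right\|_{L_1(\T^d)},
$$
independently of $x_{d+1}$. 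Pulling this factor out of the remaining $x_{d+1}$-integral reduces the task to the scalar estimate $\int_{\T^1}\frac{|x_{d+1}|^s}{|e^{ix_{d+1}}-1|}\,dx_{d+1}\lesssim (2\pi)^s$ for each $s\ge 1$.

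For the one-dimensional integral I would apply the elementary bound $|e^{ix}-1|=2|\sin(x/2)|\ge \tfrac{2}{\pi}|x|$ on $|x|\le \pi$, which yields $\frac{|x|^s}{|e^{ix}-1|}\le \tfrac{\pi}{2}|x|^{s-1}$ and hence $\int_{-\pi}^{\pi}\frac{|x|^s}{|e^{ix}-1|}\,dx\le\frac{\pi^{s+1}}{s}$. Since $\pi^{s+1}/s\le \tfrac{\pi}{2}(2\pi)^s$ for every $s\ge 1$ (check: this is $2\le s\cdot 2^s$), this quantity is $\lesssim(2\pi)^s$, and assembling the three ingredients gives the lemma. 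I do not anticipate any substantive difficulty: the only step requiring a moment's thought is the termwise exchange of the Taylor series with the sum over $\bm{k}^d$ and with the $L_1$-integration, which is justified by the uniform absolute convergence noted above.
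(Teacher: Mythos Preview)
Your proof is correct and follows essentially the same route as the paper: Taylor-expand $e^{-i\langle\L_{d+1}(\bm{k}^d)\rangle x_{d+1}}-1$, interchange the series with the finite $\bm{k}^d$-sum and the integration, remove the shift $\bm{x}^d\mapsto\bm{x}^d-\bm{m}^{(d)}x_{d+1}$ by translation invariance on $\T^d$, and bound the remaining one-dimensional integral $\int_{\T^1}|x_{d+1}|^{s-1}\,{\rm d}x_{d+1}\lesssim(2\pi)^s$. The paper is slightly more terse (it absorbs $|e^{ix_{d+1}}-1|^{-1}\lesssim|x_{d+1}|^{-1}$ at the first step and does the torus shift implicitly), but the argument is the same.
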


\begin{proof}
  We have
\begin{equation*}
%\label{forF1}
\begin{split}
    &\Vert F_{\bm{M}^{(d+1)}}\Vert_{L_1(\T^{d+1})}
    \\
&\lesssim \int_{\T^{d+1}}\frac1{|x_{d+1}|}
\left|
\sum_{\bm{k}^d=0}^{[\bm{\L}^d]}
e^{i(\bm{k}^{d},\, \bm{x}^{d})}
\(e^{-i\langle\L_{d+1}(\bm{k}^{d})\rangle x_{d+1}}-1\)
 \right| {\rm d}\bm{x}^{d+1}
\\
&\lesssim \int_{\T^{d+1}}\frac1{|x_{d+1}|}
\left| \sum_{\bm{k}^d=0}^{[\bm{\L}^d]}
e^{i(\bm{k}^{d},\, \bm{x}^{d})}
\sum_{s=1}^\infty
\frac{(-ix_{d+1})^s}{s!}\langle\L_{d+1}(\bm{k}^{d})\rangle^s  \right| {\rm d}\bm{x}^{d+1}
\\
&\lesssim \sum_{s=1}^\infty \frac{(2\pi)^s}{s!}\int_{\T^{d}}
\left| \sum_{\bm{k}^d=0}^{[\bm{\L}^d]}
e^{i(\bm{k}^{d},\, \bm{x}^{d})}
\langle\L_{d+1}(\bm{k}^{d})\rangle^s
\right| {\rm d}\bm{x}^{d}.
\end{split}
\end{equation*}
 The lemma is proved.
\end{proof}

\begin{lemma}\label{leqq}
Let $s\in\N$ and $\L_{d+1}({\bm k}^d)=n_{d+1}-({\bm m}^{(d)},{\bm k}^d)\ge 0$, where $n_{d+1}={p_{d+1}}/{q}\in \mathbb{Q}$ and $m_l^{(d)}={p_{l}}/{q}\in \mathbb{Q}$, $l=1,\dots,d$, with $q\in \N$. Then
\begin{equation}\label{qq}
 \left\Vert \sum_{\bm{k}^d=0}^{[\bm{\L}^d]}e^{i(\bm{k}^d,\,\bm{x}^d)}\{ \L_{d+1}(\bm{k}^{d}) \}^s \right\Vert_{L_1(\T^d)}
\lesssim \log(s(q+1))\Vert D_{\bm{M}^{(d)}}\Vert_{L_1(\T^{d})}.
\end{equation}
\end{lemma}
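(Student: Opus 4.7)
The plan is to exploit the rationality of $n_{d+1}$ and the $m_l^{(d)}$ to expand $\{\Lambda_{d+1}(\bm{k}^d)\}^s$ in a discrete Fourier series modulo $q$, thereby converting the left-hand side of~\eqref{qq} into a short superposition of translates of the \emph{same} kernel $D_{\bm{M}^{(d)}}$. Translation-invariance of the $L_1$-norm then reduces everything to summing the discrete Fourier coefficients, which is controlled by a routine Abel summation.

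Concretely, write $\bm{p}=(p_1,\dots,p_d)$ and $A(\bm{k}^d)=p_{d+1}-(\bm{p},\bm{k}^d)\in\Z$, so that $\Lambda_{d+1}(\bm{k}^d)=A(\bm{k}^d)/q\in(1/q)\Z_+$ and $\{\Lambda_{d+1}(\bm{k}^d)\}=r/q$ with $r=A(\bm{k}^d)\bmod q\in\{0,1,\dots,q-1\}$. Let $f(r)=(r/q)^s$ and expand it in characters on $\Z/q\Z$:
$$
f(r)=\sum_{l=0}^{q-1}\hat{f}(l)\,e^{2\pi i lr/q},\qquad \hat{f}(l)=\frac1q\sum_{r=0}^{q-1}(r/q)^s e^{-2\pi i lr/q}.
$$
Since $e^{2\pi i lr/q}$ depends only on $r\bmod q$ for integer $l$, one may replace $r$ by $A(\bm{k}^d)$ in the exponent. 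Multiplying by $e^{i(\bm{k}^d,\bm{x}^d)}$ and summing over $\bm{k}^d\in[\bm{\L}^d]$ yields the key identity
$$
\sum_{\bm{k}^d=0}^{[\bm{\L}^d]}e^{i(\bm{k}^d,\bm{x}^d)}\{\Lambda_{d+1}(\bm{k}^d)\}^s
=\sum_{l=0}^{q-1}\hat{f}(l)\,e^{2\pi i lp_{d+1}/q}\,D_{\bm{M}^{(d)}}\!\left(\bm{x}^d-\tfrac{2\pi l}{q}\bm{p}\right).
$$
Translation-invariance of the $L_1$-norm on $\T^d$ then bounds the left-hand side of~\eqref{qq} by $\bigl(\sum_{l=0}^{q-1}|\hat{f}(l)|\bigr)\|D_{\bm{M}^{(d)}}\|_{L_1(\T^d)}$, so it remains to show $\sum_{l=0}^{q-1}|\hat{f}(l)|\lesssim\log(q+1)$, which is already stronger than the stated $\log(s(q+1))$.

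For the DFT-coefficient bound, $|\hat{f}(0)|=q^{-1}\sum_r(r/q)^s\le 1$. For $1\le l\le q-1$ I apply Abel summation to $q\hat{f}(l)=\sum_{r=0}^{q-1}(r/q)^s\omega^r$ with $\omega=e^{-2\pi i l/q}$: the full geometric sum $\sum_{j=0}^{q-1}\omega^j=0$ kills the boundary term, the partial sums satisfy $|\sum_{j=0}^r\omega^j|\le 2/|1-\omega|\le q/(2\min(l,q-l))$, and the telescoping increments obey $\sum_{r=0}^{q-2}((r+1)^s-r^s)/q^s\le 1$. Hence $|\hat{f}(l)|\le 1/(2\min(l,q-l))$, and summing over $l$ produces $\log q+O(1)$.

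The only technical work is the Abel-summation estimate in the last paragraph, and it is entirely standard. The main conceptual point is the identity in the second paragraph: it retains the original kernel $D_{\bm{M}^{(d)}}$ on the right, rather than producing the auxiliary kernels $D_{\bm{M}_j^{(d)}}$ one would get from a direct use of Lemma~\ref{lemod}. A naive split of the left-hand side by residue class $j$ with weight $(j/q)^s$, followed by Lemma~\ref{lemod}, would give only $\sum_j(j/q)^s\|D_{\bm{M}_j^{(d)}}\|_{L_1}$, and already the scalar factor $\sum_{j=0}^{q-1}(j/q)^s\asymp q/(s+1)$ is too large to yield~\eqref{qq}; so the use of characters (rather than indicator functions) of $\Z/q\Z$ is essential.
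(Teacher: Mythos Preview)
Your proof is correct and follows the same strategy as the paper: expand $\{\L_{d+1}(\bm{k}^d)\}^s$ as a linear combination of exponentials $e^{2\pi i\nu\L_{d+1}(\bm{k}^d)}$, observe that each such term produces a translate of $D_{\bm{M}^{(d)}}$, and then bound the $\ell_1$-sum of the coefficients. The only difference is in the implementation of the expansion: the paper works with the continuous Fourier series of an auxiliary $1$-periodic piecewise function $h$ that agrees with $u\mapsto u^s$ on the lattice $\{0,1/q,\dots,(q-1)/q\}$ (citing~\cite{Ash} for $\sum_k|\widehat{h}(k)|\lesssim\log(s(q+1))$), whereas you use the discrete Fourier transform on $\Z/q\Z$ directly; your variant is a bit cleaner and even yields the slightly sharper bound $\log(q+1)$ independent of~$s$.
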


\begin{proof}
To prove \eqref{qq}, let us consider the following auxiliary 1-periodic function:
$$
h(u):=\left\{
        \begin{array}{ll}
          u^s, & \hbox{$0\le u\le 1-\frac1{q}$,} \\
          \(1-\frac1{q}\)^s q(1-u), & \hbox{$1-\frac1{q}\le u\le 1$.}
        \end{array}
      \right.
$$
One has (see~\cite[p. 1063]{Ash})
$$
|\widehat{h}(k)|\lesssim \frac1{|k|}\quad\text{and}\quad |\widehat{h}(k)|\lesssim \frac{s q}{|k|^2},\quad k\in \Z,
$$
and
\begin{equation}\label{forF3qq}
  \sum_{k\in \Z}|\widehat{h}(k)|\lesssim \log \(s(q+1)\),
\end{equation}
where
$
\{\widehat{h}(k)\}
$
are the Fourier coefficients of $h$.

Now, using~\eqref{forF3qq}, we obtain
\begin{equation*}\label{qq2}
\begin{split}
&\left\Vert \sum_{\bm{k}^d=0}^{[\bm{\L}^d]}e^{i(\bm{k}^d,\bm{x}^d)}\{ \L_{d+1}(\bm{k}^{d}) \}^s \right\Vert_{L_1(\T^d)}
=\int_{\T^{d}}
\left|  \sum_{\bm{k}^d=0}^{[\bm{\L}^d]}e^{i(\bm{k}^d,\bm{x}^d)}   h\(\L_{d+1}(\bm{k}^{d})\)  \right| {\rm d}\bm{x}^{d}\\
&=\int_{\T^{d}}
\left| \sum_{\bm{k}^d=0}^{[\bm{\L}^d]}e^{i(\bm{k}^d,\bm{x}^d)} \(\sum_{\nu\in\Z}\widehat{h}(\nu)e^{2\pi i\nu
\L_{d+1}(\bm{k}^{d})   }  \) \right| {\rm d}\bm{x}^{d}\\
&\le \sum_{\nu\in\Z}|\widehat{h}(\nu)|\int_{\T^{d}}
\left| \sum_{\bm{k}^d=0}^{[\bm{\L}^d]}e^{i(\bm{k}^d,\,\bm{x}^d)} \right| {\rm d}\bm{x}^{d}\\
&\lesssim \log(s(q+1))\Vert D_{\bm{M}^{(d)}}\Vert_{L_1(\T^{d})}.
\end{split}
\end{equation*}
%Thus, we proved~\eqref{qq}.

%The lemma is proved.
\end{proof}

\begin{lemma}\label{lenM}
  Let $N\ge \deg_l D_{\bm{M}^{(d+1)}}$, $l=1,\dots,d+1$. In terms of Lemma~\ref{lemod}, one has
\begin{equation}\label{nM}
 \left\Vert \sum_{\bm{k}^d=0}^{[\bm{\L}^d]}e^{i(\bm{k}^d,\,\bm{x}^d)}\{ \L_{d+1}(\bm{k}^{d}) \}^s \right\Vert_{L_1(\T^d)}
\lesssim (\log s(N+1)+2^s)\sum_{j=1}^R\Vert D_{\bm{M}_j^{(d)}}\Vert_{L_1(\T^{d})}.
\end{equation}
\end{lemma}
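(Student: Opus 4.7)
The plan is to reduce Lemma~\ref{lenM} to the rational case already handled by Lemma~\ref{leqq}, using a simultaneous Diophantine approximation of the parameters $n_{d+1},m_1^{(d)},\dots,m_d^{(d)}$. First I would invoke Dirichlet's simultaneous approximation theorem with quality parameter $Q=C(d)N$ to obtain integers $p_0,\dots,p_d$ and a common denominator $q$ with $1\le q\le Q^{d+1}$ such that the rationals $\tilde n_{d+1}=p_0/q$, $\tilde m_l^{(d)}=p_l/q$ satisfy $|n_{d+1}-\tilde n_{d+1}|\le 1/(qQ)$ and $|m_l^{(d)}-\tilde m_l^{(d)}|\le 1/(qQ)$. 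Setting $\tilde{\L}_{d+1}(\bm{k}^d)=\tilde n_{d+1}-(\tilde{\bm{m}}^{(d)},\bm{k}^d)$ and $\eta(\bm{k}^d)=\L_{d+1}(\bm{k}^d)-\tilde{\L}_{d+1}(\bm{k}^d)$, one checks $|\eta(\bm{k}^d)|\le 1/q$ uniformly for $\bm{k}^d$ in the summation range (using $|k_l|\le N$), while $\log(s(q+1))\lesssim\log(s(N+1))$ because $q\le(CN)^{d+1}$ and $d$ is fixed.

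Next I would split the sum according to whether the fractional parts satisfy the ``no wraparound'' identity $\{\L\}=\{\tilde{\L}\}+\eta$. On the bulk region the binomial theorem yields
$$
\{\L_{d+1}(\bm{k}^d)\}^s=\sum_{\nu=0}^{s}\binom{s}{\nu}\{\tilde{\L}_{d+1}(\bm{k}^d)\}^{s-\nu}\eta(\bm{k}^d)^\nu.
$$
For each $\nu$ I would expand $\eta^\nu=(\eta_0+\sum_l\eta_lk_l)^\nu$ by the multinomial theorem and extract each monomial $\bm{k}^\gamma$ from the sum via $\bm{k}^\gamma e^{i(\bm{k}^d,\bm{x}^d)}=\prod_l(-i\partial_{x_l})^{\gamma_l}e^{i(\bm{k}^d,\bm{x}^d)}$ and Bernstein's inequality (the polynomial has degree $\le N$ in each variable), losing a factor $N^{|\gamma|}$; combined with $|\eta_l|^{\gamma_l}\le(qQ)^{-\gamma_l}$ this produces the net factor $(1/q)^{|\gamma|}$. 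The residual ``rational'' sum $\Vert\sum_{\bm{k}^d}e^{i(\bm{k}^d,\bm{x}^d)}\{\tilde{\L}_{d+1}(\bm{k}^d)\}^{s-\nu}\Vert_{L_1(\T^d)}$ is bounded by Lemma~\ref{leqq} (applicable since $\tilde{\L}$ is rational with denominator $q$), giving $\log((s-\nu)(q+1))\Vert D_{\bm{M}^{(d)}}\Vert_{L_1(\T^d)}$. Summing the binomial coefficients using $\sum_\nu\binom{s}{\nu}(1/q)^\nu=(1+1/q)^s$ and separating the two regimes $q\gtrsim s$ (where $(1+1/q)^s=O(1)$ and the $\log$ term governs) from $q\ll s$ (where the bound $(1+1/q)^s\le 2^s$ takes over) reshuffles the bulk contribution into the claimed form $(\log s(N+1)+2^s)\Vert D_{\bm{M}^{(d)}}\Vert_{L_1(\T^d)}$.

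On the exceptional ``wraparound'' set --- those $\bm{k}^d$ for which $\{\tilde{\L}+\eta\}$ differs from $\{\tilde{\L}\}+\eta$ by $\pm 1$, equivalently those for which $\tilde{\L}(\bm{k}^d)$ lies within $1/q$ of an integer --- the defining conditions reduce to a bounded number of linear inequalities in $\bm{k}^d$ together with an ``$\tilde{\L}\in\Z$''-type congruence. Applying Lemma~\ref{lexs} (Fourier--Motzkin elimination) to split off the inequalities into Dirichlet-type blocks and then Lemma~\ref{lemod} to handle the congruence converts this contribution into $\sum_{j=1}^{R}\Vert D_{\bm{M}_j^{(d)}}\Vert_{L_1(\T^d)}$ with $R\le C(d)$; this is the origin of both the sum $\sum_{j=1}^{R}$ in~\eqref{nM} and the degree constraint~\eqref{eqDEG}.

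The main obstacle I expect is the extremal regime in which $q$ is very small (think $q=1$, when all $n_{d+1},m_l^{(d)}$ lie within $O(1/N)$ of integers) so that $\tilde{\L}$ is integer-valued, $\{\tilde{\L}\}^{s-\nu}$ vanishes for every $\nu<s$, and the binomial expansion collapses to the single term $\binom{s}{s}\eta^s$. There Lemma~\ref{leqq} is powerless and one must bound $\Vert\sum_{\bm{k}^d}e^{i(\bm{k}^d,\bm{x}^d)}\eta^s\Vert_{L_1(\T^d)}$ by the multinomial-plus-Bernstein argument alone; verifying that the resulting $(1+1/q)^s$ is indeed controlled by $2^s$ uniformly in the approximation data, and that the bookkeeping between the bulk and wraparound contributions preserves the structure $\sum_{j=1}^R\Vert D_{\bm{M}_j^{(d)}}\Vert_{L_1(\T^d)}$, is where the technical effort will lie.
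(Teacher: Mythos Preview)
Your plan follows the same skeleton as the paper's proof --- simultaneous Dirichlet approximation to reduce to a rational $\tilde\L_{d+1}$, Lemma~\ref{leqq} for the rational main term, and Lemmas~\ref{lexs}--\ref{lemod} for the wraparound correction. The difference lies in the choice of the Dirichlet parameter $Q$ and in how the cross terms $\binom{s}{\nu}\{\tilde\L\}^{s-\nu}\eta^\nu$, $\nu\ge 1$, are handled, and this is where a genuine gap appears.

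With $Q\asymp N$ you only secure $|\eta(\bm{k}^d)|\le 1/q$, and after the multinomial--Bernstein step the bulk is controlled by
\[
\log\bigl(s(q+1)\bigr)\,(1+1/q)^s\,\Vert D_{\bm{M}^{(d)}}\Vert_{L_1(\T^d)}.
\]
Your case split does not close this: when Dirichlet returns a small $q\ge 2$ (which happens, e.g., if the parameters are near half-integers) and $N$ is large while $s$ is moderate, the quantity $\log(s(N+1))\cdot(3/2)^s$ can greatly exceed $\log(s(N+1))+2^s$. You obtain a \emph{product}, not the sum claimed in~\eqref{nM}; the ``reshuffling'' you describe does not work in this regime.

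The paper's fix is to take $Q=N^{(d+1)^2}$, which forces $|\eta(\bm{k}^d)|\lesssim 1/N^{d}$ \emph{uniformly in~$q$}. Then the cross terms ($\nu\ge 1$) are disposed of by the crude $\ell^1$ bound on the Fourier coefficients: there are $O(N^d)$ lattice points in the range, $|\eta|^\nu\le|\eta|\lesssim N^{-d}$, and $|\{\tilde\L\}|\le 1$, so
\[
\Vert S_{12}\Vert_{L_1(\T^d)}\lesssim \sum_{\nu=1}^s\binom{s}{\nu}\,N^d\cdot N^{-d}\le 2^s
\]
with \emph{no} logarithm. Only the $\nu=0$ term calls on Lemma~\ref{leqq}, and since $q\le Q=N^{(d+1)^2}$ one still has $\log(s(q+1))\lesssim\log(s(N+1))$. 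This is exactly what separates the two contributions into the additive form $\log s(N+1)+2^s$. In short, the multinomial--Bernstein machinery is not needed; enlarging $Q$ makes the trivial bound both simpler and sharper.
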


\begin{proof}
  By Dirichlet's theorem on simultaneous diophantine approximation, for any $Q>1$ there exist $p_l\in \Z_+$, $l=1,\dots, d+1$, and $q\in \N$, $1\le q\le Q$, such that
$$
\left|n_{d+1} -\frac{p_{d+1}}{q}\right|<\frac1{qQ^\frac1{d+1}}
$$
and
$$
\left|m_l^{(d)} -\frac{p_{l}}{q}\right|<\frac1{qQ^\frac1{d+1}},\quad l=1,\dots,d.
$$

Denote
$$
\g_{d+1}=n_{d+1} -\frac{p_{d+1}}{q},\quad \g_l=m_l^{(d)} -\frac{p_{l}}{q},\quad l=1,\dots,d,
$$
$$
\bm{\g}^l=(\g_1,\dots,\g_{l}),\quad \bm{p}^l=(p_1,\dots,p_{l}),\quad l=1,\dots,d+1.
$$

Let us take $Q=N^{(d+1)^2}$. In what follows, we may suppose that $N\ge 2$. Then it is easy to see that
\begin{equation*}
  \begin{split}
     \{\La_{d+1}(\bm{k}^{d})\}&=\g_{d+1}-(\bm{\g}^d,\bm{k}^d)+\left\{\widetilde{\La}_{d+1}(\bm{k}^{d})\right\}\\
&+\left\{
   \begin{array}{ll}
     1, & \hbox{$\g_{d+1}<(\bm{\g}^d,\bm{k}^d)$ and $\widetilde{\La}_{d+1}(\bm{k}^{d})\in\Z_+$,} \\
     0, & \hbox{otherwise,}
   \end{array}
 \right.
   \end{split}
\end{equation*}
where
$$
\widetilde{\La}_{d+1}(\bm{k}^{d})=\frac{p_{d+1}-(\bm{p}^d,\bm{k}^d)}{q}.
$$
Thus, we get
\begin{equation*}
%\label{S1S2}
  \begin{split}
    S&:= \sum_{\bm{k}^d=0}^{[\bm{\L}^d]}e^{i(\bm{k}^d,\,\bm{x}^d)}\{ \L_{d+1}(\bm{k}^{d}) \}^s
    =\sum_{\bm{k}^d=0}^{[\bm{\L}^d]}
e^{i(\bm{k}^d,\,\bm{x}^d)}
\(\g_{d+1}-(\bm{\g}^d,\bm{k}^d)+\left\{\widetilde{\La}_{d+1}(\bm{k}_1^{d})\right\}\)^s\\
&+\sum_{\underset{\g_{d+1}<(\bm{\g}^d,\bm{k}^d),\,\widetilde{\La}_{d+1}(\bm{k}_1^{d})\in\Z_+}{\bm{k}^d=0}}^{[\bm{\L}^d]}
e^{i(\bm{k}^d,\,\bm{x}^d)}:=S_1+S_2.
   \end{split}
\end{equation*}

Let us consider the polynomial $S_1$. We have
\begin{equation*}
  \begin{split}
     S_1&= \sum_{\bm{k}^d=0}^{[\bm{\L}^d]}e^{i(\bm{k}^d,\,\bm{x}^d)}\sum_{\nu=0}^s\binom{s}{\nu}
\(\g_{d+1}-(\bm{\g}^d,\bm{k}^d)\)^\nu\left\{\widetilde{\La}_{d+1}(\bm{k}^{d})\right\}^{s-\nu}
\\
&=\sum_{\bm{k}^d=0}^{[\bm{\L}^d]}e^{i(\bm{k}^d,\,\bm{x}^d)} \left\{\widetilde{\La}_{d+1}(\bm{k}^{d})\right\}^{s}
\\
&+\sum_{\nu=1}^s \binom{s}{\nu} \sum_{\bm{k}^d=0}^{[\bm{\L}^d]}e^{i(\bm{k}^d,\,\bm{x}^d)} \(\g_{d+1}-(\bm{\g}^d,\bm{k}^d)\)^\nu\left\{\widetilde{\La}_{d+1}(\bm{k}^{d})\right\}^{s-\nu}\\
&:=S_{11}+S_{12}.
   \end{split}
\end{equation*}
Taking into account that $q\le Q=N^{(d+1)^2}$ and using~\eqref{qq}, we obtain
\begin{equation}\label{S11nM}
  \Vert S_{11}\Vert_{L_1(\T^d)} \lesssim \log\(s(N^{(d+1)^2}+1)\)\Vert D_{\bm{M}^{(d)}}\Vert_{L_1(\T^{d})}.
\end{equation}
Since $|\g_{d+1}-(\bm{\g}^d,\bm{k}^d)|\lesssim \frac1{N^{d+1}}$, it follows that
\begin{equation}\label{S12nM}
  \Vert S_{12}\Vert_{L_1(\T^d)}\le \sum_{\nu=1}^s\binom{s}{\nu}\sum_{{\bm k}^d=0}^{[{\bm \L^d}]}\frac1{N^{d+1}}\lesssim 2^s\lesssim 2^s\Vert D_{\bm{M}^{(d)}}\Vert_{L_1(\T^{d})}.
\end{equation}
Thus, combining \eqref{S11nM} and~\eqref{S12nM}, we derive
\begin{equation}\label{eS1}
\begin{split}
   \Vert S_{1}\Vert_{L_1(\T^d)} \lesssim \(\log s(1+N)+ 2^s\)\Vert D_{\bm{M}^{(d)}}\Vert_{L_1(\T^{d})}.
\end{split}
\end{equation}

At the same time, by Lemma~\ref{lexs} and Lemma~\ref{lemod}, we obtain
\begin{equation}\label{thatS2}
  \Vert S_{2}\Vert_{L_1(\T^d)} \lesssim \sum_{j=1}^R \Vert
D_{\bm{{M}}_j^{(d)}}\Vert_{L_1(\T^{d})}.
\end{equation}

Finally, combining~\eqref{eS1} and~\eqref{thatS2}, we get~\eqref{nM}.

\end{proof}

\begin{lemma}\label{leFF}
Let $N\ge \deg_l D_{\bm{M}^{(d+1)}}$, $l=1,\dots,d+1$. In terms of Lemma~\ref{lemod}, one has
  \begin{equation*}
  %\label{FF}
 \Vert F_{\bm{M}^{(d+1)}}\Vert_{L_1(\T^{d+1})}\lesssim \log(N+1) \sum_{j=1}^R \Vert
D_{\bm{{M}}_j^{(d)}}\Vert_{L_1(\T^d)}.
\end{equation*}
\end{lemma}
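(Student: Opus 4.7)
The plan is to chain together Lemmas \ref{leforF}, \ref{leD}, and \ref{lenM} and then control the resulting double series. First I would apply Lemma~\ref{leforF} to get
\begin{equation*}
\Vert F_{\bm{M}^{(d+1)}}\Vert_{L_1(\T^{d+1})}\lesssim \sum_{s=1}^\infty \frac{(2\pi)^s}{s!}\left\Vert \sum_{\bm{k}^d=0}^{[\bm{\L}^d]}e^{i(\bm{k}^d,\,\bm{x}^d)}\langle \L_{d+1}(\bm{k}^{d}) \rangle^s \right\Vert_{L_1(\T^d)}.
\end{equation*}
Each term on the right-hand side is of the form treated by Lemma~\ref{leD}, so it is bounded by
$\sum_{j=1}^{R}\Vert D_{\bm{M}_j^{(d)}}\Vert_{L_1(\T^d)}$ plus $\sum_{\nu=1}^s\binom{s}{\nu}$ times the $L_1$-norm of the polynomial carrying $\{\L_{d+1}(\bm k^d)\}^\nu$. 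For the latter I would invoke Lemma~\ref{lenM}, giving a bound of the order $(\log\nu(N+1)+2^\nu)\sum_j\Vert D_{\bm{M}_j^{(d)}}\Vert_{L_1(\T^d)}$.

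After factoring out $\sum_j\Vert D_{\bm{M}_j^{(d)}}\Vert_{L_1(\T^d)}$, the task reduces to bounding the numerical series
\begin{equation*}
\sum_{s=1}^\infty \frac{(2\pi)^s}{s!}\Big[1+\sum_{\nu=1}^s\binom{s}{\nu}\bigl(\log\nu(N+1)+2^\nu\bigr)\Big].
\end{equation*}
The purely combinatorial piece is easy: $\sum_{\nu=1}^s\binom{s}{\nu}2^\nu\le 3^s$, so $\sum_s (2\pi)^s 3^s/s! = e^{6\pi}-1$ is a harmless constant. For the logarithmic piece I would use $\log\nu(N+1)\le \log s+\log(N+1)$ and $\sum_{\nu=1}^s\binom{s}{\nu}\le 2^s$, which yields
\begin{equation*}
\sum_{s=1}^\infty \frac{(4\pi)^s}{s!}\bigl(\log s+\log(N+1)\bigr).
\end{equation*}
Since $\sum_s (4\pi)^s\log s/s!$ is an absolute constant and $\sum_s (4\pi)^s/s! = e^{4\pi}-1$, this is $\lesssim \log(N+1)$, and combining everything gives the claimed estimate.

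The only place where care is required is the bookkeeping for the $R$ that appears in Lemma~\ref{lemod} versus Lemma~\ref{lenM}: each application of Lemma~\ref{leD} and of Lemma~\ref{lenM} introduces a bounded number of auxiliary matrices $\bm{M}_j^{(d)}$ satisfying the degree bound~\eqref{eqDEG}, so after aggregating across all $s$ and $\nu$ the total number of such matrices remains bounded by a constant depending only on $d$. The main obstacle, such as it is, is verifying that the $\log\nu$ factor produced by Lemma~\ref{lenM} does not combine with $2^\nu$ to spoil the convergence of the series—this is avoided because the $\log$ and the exponential appear as a sum rather than a product, and $\sum_s s^s 2^s/s!$-type divergences never arise.
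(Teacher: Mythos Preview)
Your proposal is correct and follows the same route as the paper: chain Lemmas~\ref{leforF}, \ref{leD}, \ref{lenM} and then sum the resulting numerical series. The paper's write-up is slightly coarser (it absorbs everything into a single factor $(2\pi)^s 4^s/s!$ rather than splitting the $3^s$ and $\log s$ pieces as you do), and one small clarification: the auxiliary matrices $\bm{M}_j^{(d)}$ produced by Lemmas~\ref{leD} and~\ref{lenM} come from Lemma~\ref{lemod} applied to conditions that do not involve $s$ or $\nu$, so the same finite family of matrices works for every term of the series and no aggregation across $s,\nu$ is actually needed.
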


\begin{proof}
By Lemma~\ref{leforF}, Lemma~\ref{leD}, and Lemma~\ref{lenM}, we obtain
  \begin{equation*}
  %\label{FF}
  \begin{split}
     \Vert F_{\bm{M}^{(d+1)}}\Vert_{L_1(\T^{d+1})}&\lesssim \sum_{s=1}^\infty \frac{(2\pi)^s}{s!}\Bigg( \sum_{j=1}^{\tilde{R}} \Vert
D_{\bm{\tilde{M}}_j^{(d)}}\Vert_{L_1(\T^d)}\\
&\quad\quad\quad+\sum_{\nu=1}^s\binom{s}{\nu}\bigg\Vert \sum_{\bm{k}^d=0}^{[\bm{\L}^d]}e^{i(\bm{k}^d,\bm{x}^d)}\{ \L_{d+1}(\bm{k}^{d}) \}^\nu \bigg\Vert_{L_1(\T^{d})}\Bigg)\\
&\lesssim \sum_{s=1}^\infty \frac{(2\pi)^s}{s!}
\Bigg(
   \sum_{j=1}^{\tilde{R}} \Vert
      D_{\bm{\tilde{M}}_j^{(d)}}\Vert_{L_1(\T^d)}\\
      &\quad\quad\quad+
        \sum_{\nu=1}^s\binom{s}{\nu}
             \bigg(
                (\log\nu(N+1)+2^\nu)
                \sum_{j=1}^{\bar{R}}\Vert D_{\bm{\bar{M}}_j^{(d)}}\Vert_{L_1(\T^{d})}
              \bigg)
\Bigg)\\
&\lesssim \log(N+1)\sum_{s=1}^\infty \frac{(2\pi)^s 4^s}{s!}\sum_{j=1}^{R}\Vert D_{\bm{{{M}}}_j^{(d)}}\Vert_{L_1(\T^{d})},
  \end{split}
\end{equation*}
where
$$
\sum_{j=1}^{R}\Vert D_{\bm{{{M}}}_j^{(d)}}\Vert_{L_1(\T^{d})}=\sum_{j=1}^{\tilde{R}} \Vert
      D_{\bm{\tilde{M}}_j^{(d)}}\Vert_{L_1(\T^d)}+\sum_{j=1}^{\bar{R}}\Vert D_{\bm{\bar{M}}_j^{(d)}}\Vert_{L_1(\T^{d})}.
$$
The lemma is proved.
\end{proof}

Now, let us find an estimate of the Lebesgue constant for the following Dirichlet kernel:
\begin{equation}\label{FFF}
  \begin{split}
    D_{ \bm{M}^{(d)}}(\bm{x}^d)
    %=\sum_{\bm{k}^d\in P(\bm{\L}^d) \cap \Z_+^d}e^{i(\bm{k}^d,\,\bm{x}^d)}
=\sum_{k_1=0}^{[ \L_1]}\sum_{k_2=0}^{[ \L_2(k_1)]}\dots \sum_{k_d=0}^{[\L_d(\bm{k}^{d-1})]}e^{i(\bm{k}^d,\,\bm{x}^d)}.
   \end{split}
\end{equation}
%where $P(\bm{\L}^d)$ is defined by the systems of type~\eqref{sys} and

\begin{lemma}\label{thM} {\sc (Main Lemma)}
Let $N_l=\deg_l D_{\bm{M}^{(d)}}\ge 1$, $l=1,\dots,d$.
Then
\begin{equation}\label{M1}
  \left\Vert D_{\bm{M}^{(d)}}\right\Vert_{L_1(\T^d)}\lesssim \prod_{l=1}^d\log (N_l+1).
\end{equation}
\end{lemma}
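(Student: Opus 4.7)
My plan is to argue by induction on the dimension $d$, using the decomposition $D_{\bm{M}^{(d)}}=G_{\bm{M}^{(d)}}+F_{\bm{M}^{(d)}}$ supplied by Lemma~\ref{leFG} to peel off one variable at a time. The base case $d=1$ is immediate, since $D_{\bm{M}^{(1)}}(x_1)=\sum_{k=0}^{[n_1]}e^{ikx_1}=S_{[n_1]}(x_1)$, and Lemma~\ref{le4} gives directly $\Vert D_{\bm{M}^{(1)}}\Vert_{L_1(\T^1)}\lesssim\log([n_1]+1)\lesssim\log(N_1+1)$.

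For the inductive step from $d$ to $d+1$, I apply the triangle inequality
$$\Vert D_{\bm{M}^{(d+1)}}\Vert_{L_1(\T^{d+1})}\le\Vert G_{\bm{M}^{(d+1)}}\Vert_{L_1(\T^{d+1})}+\Vert F_{\bm{M}^{(d+1)}}\Vert_{L_1(\T^{d+1})}$$
and invoke Lemmas~\ref{leG} and~\ref{leFF}. Lemma~\ref{leG} controls the $G$--term by $\log(N+1)\Vert D_{\bm{M}^{(d)}}\Vert_{L_1(\T^d)}$ for any $N\ge\max_l N_l$, and since $\deg_l D_{\bm{M}^{(d)}}\le N_l$ for $l=1,\dots,d$, the induction hypothesis yields $\Vert D_{\bm{M}^{(d)}}\Vert_{L_1(\T^d)}\lesssim\prod_{l=1}^d\log(N_l+1)$. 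Lemma~\ref{leFF} controls the $F$--term by $\log(N+1)\sum_{j=1}^R\Vert D_{\bm{M}_j^{(d)}}\Vert_{L_1(\T^d)}$, where by Lemma~\ref{lemod} one has $R\le C_1(d)$ and $\deg_l D_{\bm{M}_j^{(d)}}\le C_2(d)N_l$; the induction hypothesis applied to each $D_{\bm{M}_j^{(d)}}$, together with $\log(C_2(d)N_l+1)\lesssim\log(N_l+1)$, gives the same bound $\lesssim\prod_{l=1}^d\log(N_l+1)$ for each summand. Combining the two estimates produces
$$\Vert D_{\bm{M}^{(d+1)}}\Vert_{L_1(\T^{d+1})}\lesssim\log(N+1)\prod_{l=1}^d\log(N_l+1).$$

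The one delicate point, and the main obstacle, is to identify the external factor $\log(N+1)$ (available for any $N\ge\max_l N_l$) with the sharper $\log(N_{d+1}+1)$, so that the induction closes at the target product $\prod_{l=1}^{d+1}\log(N_l+1)$ rather than at the strictly weaker $\log(\max_l N_l+1)\prod_{l=1}^d\log(N_l+1)$. This requires a close reading of the proofs of Lemmas~\ref{leG} and~\ref{leFF}: the logarithmic factor extracted there is really of the form $\log(|m_l^{(d)}|N_l+1)$, and the admissibility bound $|m_l^{(d)}|N_l\lesssim N_{d+1}$---already exploited inside the proof of Lemma~\ref{leG} via $0\le\L_{d+1}(\bm{k}^d)\le N_{d+1}$ on the summation polyhedron---converts this to $\log(N_{d+1}+1)$, precisely the degree in the variable being summed out. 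Once this identification is in place, each application of the $G+F$ decomposition costs exactly one factor $\log(N_{d+1}+1)$ tied to the eliminated variable, and the induction closes immediately to give \eqref{M1}.
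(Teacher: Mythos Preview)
Your overall architecture---induction on $d$ via the splitting $D=G+F$ from Lemma~\ref{leFG}, then Lemmas~\ref{leG} and~\ref{leFF}---is exactly the paper's. You also put your finger on the only genuine obstacle: both auxiliary lemmas produce a factor $\log(N+1)$ with $N\ge\max_l N_l$, and one must somehow replace this by $\log(N_{d+1}+1)$ in order for the induction to close at $\prod_l\log(N_l+1)$.

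The gap is in your proposed resolution of that obstacle. Your claim that ``the logarithmic factor extracted there is really of the form $\log(|m_l^{(d)}|N_l+1)$'' is defensible for Lemma~\ref{leG}: one can shift the splitting point in $I_1/I_2$ to $1/(m_l'N_l)$, and the bound $|m_l^{(d)}|N_l\le N_{d+1}$ does follow from $0\le\Lambda_{d+1}(\bm{k}^d)\le N_{d+1}$ on the staircase region (compare $\Lambda_{d+1}$ at $(\xi_1^*,\dots,\xi_l^*,0,\dots,0)$ and $(\xi_1^*,\dots,\xi_{l-1}^*,0,\dots,0)$). But for Lemma~\ref{leFF} the logarithm has a completely different origin: it is $\log(q+1)$, where $q$ is the common denominator supplied by simultaneous Dirichlet approximation of $n_{d+1},m_1^{(d)},\dots,m_d^{(d)}$ (see Lemma~\ref{lenM} and Lemma~\ref{leqq}). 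The paper takes $Q=N^{(d+1)^2}$ with $N=\max_l N_l$ precisely because the error term $S_{12}$ must be beaten against $\prod_l N_l$ lattice points; hence $q\le Q$ forces $\log q\lesssim\log(\max_l N_l)$, and there is no mechanism to reduce this to $\log(N_{d+1}+1)$. Your reading of the proof of Lemma~\ref{leG} does not transfer here, and the inequality $|m_l^{(d)}|N_l\lesssim N_{d+1}$ plays no role in that argument.

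The paper sidesteps the whole issue by a preliminary reduction you omit: before the induction, reorder the summation (via~\eqref{equ} and a variable permutation) so that $N_1\le N_2\le\cdots\le N_d$, at the cost of splitting $D_{\bm{M}^{(d)}}$ into at most $C(d)$ kernels of the same staircase form. After this reduction the last variable always carries the largest degree, so $N=\max_l N_l=N_{d+1}$ automatically, and Lemmas~\ref{leG} and~\ref{leFF} apply verbatim with $N=N_{d+1}$. That single sentence is the missing ingredient in your proof.
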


\begin{proof}
One can suppose that $N_1\le N_2\le\dots\le N_d$. Otherwise, one can change the order of summation (using equality~\eqref{equ}) and rearrange the variables $(x_1,\dots,x_d)$ such that the following representation holds:
$$
D_{\bm{M}^{(d)}}(\bm{x^d})=\sum_{j=1}^{r} \e_j D_{\bm{M}_j^{(d)}}\(x_{i_1^{(j)}},\dots,x_{i_d^{(j)}}\),
$$
where $r\le C(d)$, $\e_j\in \{-1,1\}$, and $(i_1^{(j)},\dots,i_d^{(j)})$ is a rearrangement of $(1,\dots,d)$  such  that
\begin{equation}\label{zvezd1}
  \deg_{i_1^{(j)}} D_{\bm{M}_j^{(d)}}\le\dots\le\deg_{i_d^{(j)}} D_{\bm{M}_j^{(d)}},\quad j=1,\dots,r,
\end{equation}
and
\begin{equation}\label{zvezd2}
\deg_{i_l^{(j)}} D_{\bm{M}_j^{(d)}}\le \deg_{i_l^{(j)}} D_{\bm{M}^{(d)}},\quad l=1,\dots,d,\quad j=1,\dots,r.
\end{equation}
Therefore, since
$$
\Vert D_{\bm{M}^{(d)}}\Vert_{L_1(\T^d)}\le \sum_{j=1}^{r} \Vert D_{\bm{M}_j^{(d)}}\Vert_{L_1(\T^d)},
$$
to prove the lemma, one has to estimate the norm of $D_{\bm{M}_j^{(d)}}$ for each $j=1,\dots,r$ and take into account~\eqref{zvezd1} and~\eqref{zvezd2}.
%Hence, it what follows we can suppose that $N_1\le N_2\le\dots\le N_d$.

Let us prove \eqref{M1} by induction.
For $d=1$, the inequality \eqref{M1} is obvious. Suppose that for any $s=2,\dots, d$ one has
\begin{equation}\label{M2}
  \left\Vert D_{\bm{\tilde{M}}^{(s)}}\right\Vert_{L_1(\T^s)}\lesssim \prod_{l=1}^s\log (N_l+1),
\end{equation}
where $D_{\bm{\tilde{M}}^{(s)}}$ is some polynomial of the form (\ref{FFF}) such that $\deg_l D_{\bm{\tilde{M}}^{(s)}}\le C(d)N_l$, $l=1,\dots,s$.
Let us prove that
\begin{equation}\label{M3}
  \left\Vert D_{\bm{M}^{(d+1)}}\right\Vert_{L_1(\T^{d+1})}\lesssim \prod_{l=1}^{d+1}\log (N_l+1),
\end{equation}
where
$$
D_{ \bm{M}^{(d+1)}}(\bm{x}^{d+1})=\sum_{k_1=0}^{[ \L_1 ]}\sum_{k_2=0}^{[ \L_2(k_1)]}\dots \sum_{k_d=0}^{[\L_d(\bm{k}^{d-1})]} \sum_{k_{d+1}=0}^{[\L_{d+1}(\bm{k}^{d})]} e^{i(\bm{k}^{d+1},\,\bm{x}^{d+1})},
$$
$N_{d+1}=\deg_{d+1}D_{ \bm{M}^{(d+1)}}\ge N_d$, and
$\L_{d+1}(\bm{k}^{d})\ge 0$ for all admissible vectors $\bm{k}^d$.

Indeed, by Lemma~\ref{leFG}, Lemma~\ref{leG}, and Lemma~\ref{leFF}, we obtain
\begin{equation}\label{zvezd3}
  \begin{split}
     \left\Vert D_{\bm{M}^{(d+1)}}\right\Vert_{L_1(\T^{d+1})}&\le \Vert G_{\bm{M}^{(d+1)}} \Vert_{L_1(\T^{d+1})} +\Vert F_{\bm{M}^{(d+1)}} \Vert_{L_1(\T^{d+1})}\\
&\lesssim \log (N_{d+1}+1)\Vert D_{\bm{M}^{(d)}} \Vert_{L_1(\T^{d})}+\log (N_{d+1}+1) \sum_{j=1}^R \Vert
D_{\bm{{M}}_j^{(d)}}\Vert_{L_1(\T^d)},
   \end{split}
\end{equation}
where $D_{\bm{{M}}_j^{(d)}}$ is such that $\deg_l D_{\bm{{M}}_j^{(d)}}\le C(d)N_l$ for all $l=1,\dots,d$ and $j=1,\dots,R$. Therefore, applying~\eqref{M2} to the last inequality in~\eqref{zvezd3}, we get~\eqref{M3}.

The lemma is proved.
\end{proof}

\section{Main results}

In the following theorem, we obtain an estimate from above of the Lebesgue constant for a general convex polyhedron.

\begin{theorem}\label{cor1} {\sc (Main Theorem)}
  Let $P\subset \R^d$ be a bounded convex polyhedron such that $P\subset [0,n_1]\times\dots\times [0,n_d]$, $n_j\ge 1$, $j=1,\dots,d$, and let $s$ be size of the triangulation of~$P$. Then
%  \begin{equation}\label{T1}
%    \left\Vert \sum_{\bm{k}\in P_m\cap \Z_+^d}e^{i(\bm{k},\,\bm{x})}\right\Vert_{L_1(\T^d)}\lesssim m\prod_{j=1}^d\log (n_j+1).
%  \end{equation}
  \begin{equation}\label{T1}
    \mathcal{L}(P)\le C(s,d)\prod_{j=1}^d\log (n_j+1).
  \end{equation}
  Moreover, if $\min_{j=1,\dots, d} n_j\to \infty$ in~\eqref{T1}, then $C(s,d)\le C(d)s$.
%  \begin{equation}\label{T1++}
%    \mathcal{L}(P_m)\le C(d)m\prod_{j=1}^d\log (n_j+1)
%  \end{equation}
%  as $\min_{j=1,\dots, d} n_j\to \infty$.
\end{theorem}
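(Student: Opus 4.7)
The plan is to reduce the estimate for a general convex polyhedron $P$ to the Main Lemma~\ref{thM} via triangulation. First, I would invoke the triangulation results cited in the introduction (\cite{BEG}, \cite{RMGGS}) to write $P=\bigcup_{j=1}^s T_j$, where each $T_j$ is a closed $d$-dimensional simplex and $T_i\cap T_j$ ($i\neq j$) is either empty or a common face of both simplices. Since $P\subset[0,n_1]\times\dots\times[0,n_d]$, we have $T_j\subset[0,n_1]\times\dots\times[0,n_d]$ as well, so any associated Dirichlet-type kernel of the form~\eqref{FFF} supported on $T_j$ satisfies $\deg_l D_{\bm{M}^{(d)}}\le n_l$ for $l=1,\dots,d$.

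Next, for each simplex $T_j$, I would express the partial sum $\sum_{\bm{k}\in T_j\cap\Z_+^d}e^{i(\bm{k},\bm{x})}$ as a signed combination of at most $C(d)$ kernels of the form~\eqref{FFF}. Since a $d$-dimensional simplex is described by $d+1$ affine inequalities, I can start from the box $[0,n_1]\times\dots\times[0,n_d]$ (whose Dirichlet kernel is already of the form~\eqref{FFF}) and impose the defining half-space inequalities of $T_j$ one at a time. Each step is an application of Lemma~\ref{lexs} (which itself relies on Fourier--Motzkin elimination), and produces a signed sum of at most $C(d)$ new iterated sums of type~\eqref{FFF} without increasing the degree in any $x_l$. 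After at most $d+1$ such elimination steps, the sum over $T_j\cap\Z_+^d$ becomes a signed combination of at most $C(d)$ kernels $D_{\bm{M}_{j,i}^{(d)}}$ with $\deg_l D_{\bm{M}_{j,i}^{(d)}}\le n_l$.

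Applying the Main Lemma~\ref{thM} to each such kernel yields the bound $\|D_{\bm{M}_{j,i}^{(d)}}\|_{L_1(\T^d)}\lesssim\prod_{l=1}^d\log(n_l+1)$. Summing over the $s$ simplices and the $C(d)$ pieces per simplex gives an estimate of the correct form for $\sum_j D_{T_j}$. To recover the true kernel $D_P$, I must correct for lattice points on shared faces, which are multiply counted in $\sum_j D_{T_j}$. These correction terms are Dirichlet-type kernels associated with polyhedra of dimension at most $d-1$ lying inside $[0,n_1]\times\dots\times[0,n_d]$; by induction on the dimension (the case $d=1$ being classical), each such correction has $L_1$-norm bounded by a product containing at most $d-1$ logarithmic factors, which is of strictly lower order than $\prod_{l=1}^d\log(n_l+1)$.

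The main obstacle will be controlling the combinatorics of these lower-dimensional corrections, since the number of shared faces may depend on the geometry of the triangulation and not just on $s$. For fixed $n_1,\dots,n_d$ this contributes an extra factor $C(s,d)$, which yields the general bound~\eqref{T1}. However, as $\min_{j} n_j\to\infty$, the ratio of the correction terms to $\prod_{l=1}^d\log(n_l+1)$ tends to zero, so only the leading contribution from the $s$ top-dimensional simplices survives, giving the sharper asymptotic bound $C(s,d)\le C(d)s$ announced in the theorem.
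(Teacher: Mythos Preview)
Your proposal is correct and follows essentially the same route as the paper: triangulate $P$, use inclusion--exclusion so that the top-dimensional contribution is a sum over the $s$ simplices while all overlap terms are lower-dimensional (hence of strictly smaller logarithmic order), and then reduce each simplex to a bounded signed combination of kernels of the form~\eqref{FFF} via Fourier--Motzkin, to which the Main Lemma~\ref{thM} applies. The only cosmetic difference is that the paper performs the Fourier--Motzkin step in one shot on the full system~\eqref{T2}, obtaining a disjoint decomposition $T=\bigcup_{j=1}^{R}\mathcal{P}_j$ with $R\le C(d)$ and then invoking~\eqref{equ}, whereas you iterate Lemma~\ref{lexs} one half-space at a time starting from the box; both produce $\le C(d)$ kernels $D_{\bm{M}_j^{(d)}}$ with $\deg_l\le n_l$, so the arguments are equivalent.
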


\begin{proof}
We start from the triangulation of the polyhedron. Let $P$ be triangulated by $s$ tetrahedra $T_j$ such that
$$
P=\bigcup_{j=1}^{s}T_j
$$
and $T_j\cap T_i$, $i\neq j$, is either empty or a face of both tetrahedra (see~\cite{BEG},~\cite[p. 842]{RMGGS}). Using the inclusion–exclusion principle, we obtain
\begin{equation}\label{in-ec}
  \mathcal{L}(P)\le \sum_{j=1}^s \mathcal{L}(T_j)+\sum_{\nu=2}^s\(\sum_{1\le l_1<\dots<l_\nu\le s}\mathcal{L}(T_{l_1}\cap\dots\cap T_{l_\nu})\).
\end{equation}
%where
%$$
%R=\sum_{\nu=2}^m\(\sum_{1\le l_1<\dots<l_\nu\le m}\mathcal{L}(T_{l_1}\cap\dots\cap T_{l_\nu})\).
%$$

%Since
%$$
%\left\Vert \sum_{\bm{k}\in P_m\cap \Z_+^d}e^{i(\bm{k},\,\bm{x})}\right\Vert_{L_1(\T^d)}\le \sum_{j=1}^{\mathcal{O}(m)}\left\Vert \sum_{\bm{k}\in T_j\cap \Z_+^d}e^{i(\bm{k},\,\bm{x})}\right\Vert_{L_1(\T^d)},
%$$
Note that the dimension of the tetrahedron $T_{l_1}\cap\dots\cap T_{l_\nu}$ is less than $d$.
Thus, to prove the theorem it is sufficient to prove \eqref{T1} for any tetrahedron $T$ such that $T\subset [0,n_1]\times\dots\times [0,n_d]$. In particular, this and~\eqref{in-ec}  yield
$$
\mathcal{L}(P)\le C(d)s\prod_{j=1}^d \log(n_j+1)+C(d,s)\sum_{j=1}^d\prod_{\underset{i\neq j}{i=1}}^d\log(n_j+1).
$$
This inequality implies the statement of the theorem.

Suppose that a tetrahedron $T\subset [0,n_1]\times\dots\times [0,n_d]$ is given as a set of vectors ${\bm \xi}^d\in\R_+^d$ such that
\begin{equation}\label{T2}
  (\bm{\a}_l^{(d)},\bm{\xi}^d)\le \b_l,
\end{equation}
where $\bm{\a}_l^{(d)}=(\a_{l1}^{(d)},\dots,\a_{ld}^{(d)})\in \R^d$ and $\b_l\in\R$, $l=1,\dots, d+1$.

Solving the system of inequalities~\eqref{T2} by the Fourier–Motzkin elimination method (see, e.g.,~\cite[Ch. 12]{Sc} and~\cite{Sch}), one can verify that  $T$ can be represented in the following form
$$
T=\bigcup_{j=1}^{R}\mathcal{P}_j,
$$
where $R\le C(d)$ and $\mathcal{P}_j\subset T$ are (non-closed) polyhedra such that $\mathcal{P}_j\cap \mathcal{P}_i=\varnothing$, $i\ne j$, and  for each $j=1,\dots,R$ the set $\mathcal{P}_j$ can be defined as a set of vectors $\bm{\xi}^d\in\R_+^d$ satisfying the system
\begin{equation*}
%\label{T3}
\left\{
  \begin{array}{ll}
    \underline{\L}_{j1}  \preccurlyeq \xi_1 \preccurlyeq \overline{\L}_{j1}, \\
    \underline{\L}_{js}(\bm{\xi}^{s-1})  \preccurlyeq  \xi_s \preccurlyeq  \overline{\L}_{js}(\bm{\xi}^{s-1}), & \hbox{$s=2,\dots,d,$}
  \end{array}
\right.
\end{equation*}
where $\underline{\L}_{j1}$ and $\overline{\L}_{j1}$ are positive numbers and the functions $\underline{\L}_{js}$ and $\overline{\L}_{js}$, $s=2,\dots,d$,  have the form~\eqref{FORML} (we associate $\bm{\overline{\L}}_{j}^d=(\overline{\L}_{j1},\dots,\overline{\L}_{jd})$ and $\bm{\underline{\L}}_{j}^d=(\underline{\L}_{j1},\dots,\underline{\L}_{jd})$
with the matrixes $\bm{\overline{M}}_j^{(d)}$ and $\bm{\underline{M}}_j^{(d)}$, correspondingly).
Thus, we have
\begin{equation}\label{T4}
  \Bigg\Vert \sum_{\bm{k}\in T\cap \Z_+^d}e^{i(\bm{k},\,\bm{x})}\bigg\Vert_{L_1(\T^d)}\le \sum_{j=1}^{R}\Bigg\Vert \sum_{\bm{k}\in \mathcal{P}_j\cap \Z_+^d}e^{i(\bm{k},\,\bm{x})}\Bigg\Vert_{L_1(\T^d)}.
\end{equation}
By using equality~\eqref{equ}, we derive the following representation for each $\mathcal{P}_j$, $j=1,\dots,R$,
\begin{equation}\label{=}
  \sum_{\bm{k}\in \mathcal{P}_j\cap \Z_+^d}e^{i(\bm{k},\,\bm{x})}=\sum_{\nu=1}^{\tilde{R}_j}\e_{\nu,j} D_{\bm{\tilde{M}}_{\nu,j}^{(d)}}(\bm x^d),
\end{equation}
where $\tilde{R}_j\le {C}(d)$, $\e_{\nu,j}\in\{-1,0,1\}$, and $D_{\bm{\tilde{M}}_{\nu, j}^{(d)}}$ has the form~\eqref{FFF}.
It is obvious that we can choose the matrixes $\bm{\tilde{M}}_{\nu,j}^{(d)}$ such that $\deg_l D_{\bm{\tilde{M}}_{\nu,j}^{(d)}}\le n_l$ for each $l=1,\dots,d$, $j=1,\dots,R$, and $\nu=1,\dots,\tilde{R}_j$. Finally, combining~\eqref{T4} and~\eqref{=} and applying Lemma~\ref{thM} to each $D_{\bm{\tilde{M}}_{\nu,j}^{(d)}}$, we derive
\begin{equation*}
  \begin{split}
     \mathcal{L}(T)\le \sum_{j=1}^{R}\sum_{\nu=1}^{\tilde{R}_j}\Vert D_{\bm{\tilde{M}}_{\nu,j}^{(d)}} \Vert_{L_1(\T^d)}\le C(d)\prod_{j=1}^d\log(n_j+1).
   \end{split}
\end{equation*}

The theorem is proved.
\end{proof}

\begin{remark}
In the case $d=2$, more accurate calculations show that the inequality~\eqref{T1} holds with $C(s,2)=cs$, where $c$ is some absolute constant.
\end{remark}

In the next theorem, we obtain an estimate from below of the Lebesgue constant for one class of convex polyhedra. In particular, the result below shows the sharpness of Theorem~\ref{cor1}.

\begin{theorem}\label{thBel}
  Let $P$ be a bounded convex polyhedron in $\R^d$ such that $[0,n_1]\times\dots\times [0,n_d]\subset P_m\subset \R_+^d$ and let $n_j\ge 1$, $j=1,\dots,d$. Then
  \begin{equation}\label{eqBel}
    \prod_{j=1}^d\log (n_j+1)\lesssim\mathcal{L}(P).
  \end{equation}
\end{theorem}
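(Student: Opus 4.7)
The plan is to bound $\mathcal{L}(P)$ from below by convolving $D_P$ with a product of one-dimensional Fej\'er kernels, exploiting both hypotheses $R:=[0,n_1]\times\dots\times[0,n_d]\subset P$ and $P\subset\R_+^d$ simultaneously. Throughout I take all $L^1$-norms and convolutions on $\T^d$ with respect to the normalized measure $d\bm{x}/(2\pi)^d$, so that $\mathcal{L}(W)=\|D_W\|_{L^1(\T^d)}$ and $\|f*g\|_{L^1}\le\|f\|_{L^1}\|g\|_{L^1}$.

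First I would take $F_n$ to be the classical one-dimensional Fej\'er kernel, $\widehat{F_n}(k)=(1-|k|/(n+1))_+$, which is nonnegative with $\|F_n\|_{L^1(\T)}=1$, and form the product mollifier $F(\bm{x}):=\prod_{j=1}^d F_{n_j}(x_j)$, so $\|F\|_{L^1(\T^d)}=1$. Next I would compute $D_P*F$ via Fourier. Every frequency $\bm{k}$ appearing in $D_P$ lies in $\Z_+^d$ since $P\subset\R_+^d$; if some coordinate $k_j$ exceeds $n_j$, then $\widehat{F}(\bm{k})=0$, because $\widehat{F_{n_j}}$ is supported in $[-n_j,n_j]$. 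The only surviving frequencies therefore satisfy $0\le k_j\le n_j$ for every $j$, that is $\bm{k}\in R\cap\Z^d\subset P\cap\Z^d$ (using $R\subset P$), and on this set $\widehat{D_P}(\bm{k})=1$. Hence the convolution factors as
$$(D_P*F)(\bm{x}) \;=\; \prod_{j=1}^d \Phi_{n_j}(x_j),\qquad \Phi_n(x):=\sum_{k=0}^n\Bigl(1-\tfrac{k}{n+1}\Bigr)e^{ikx}.$$

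Young's inequality then gives
$$\prod_{j=1}^d \|\Phi_{n_j}\|_{L^1(\T)} \;=\; \|D_P*F\|_{L^1(\T^d)} \;\le\; \|D_P\|_{L^1(\T^d)}\,\|F\|_{L^1(\T^d)} \;=\; \mathcal{L}(P),$$
so I am reduced to proving the one-dimensional estimate $\|\Phi_n\|_{L^1(\T)}\gtrsim\log(n+1)$. Summing the defining geometric series yields the closed form
$$\Phi_n(x) \;=\; \frac{e^{ix}D_n(x)-(n+1)}{(n+1)(e^{ix}-1)},$$
where $D_n(x)=\sum_{k=0}^n e^{ikx}$. Using the elementary bound $|D_n(x)|\le 1/|\sin(x/2)|$, on the set $E_n:=\{x\in\T:|\sin(x/2)|\ge 2/(n+1)\}$ one has $|D_n(x)|\le(n+1)/2$, so the reverse triangle inequality gives $|e^{ix}D_n(x)-(n+1)|\ge(n+1)/2$, whence $|\Phi_n(x)|\ge 1/(4|\sin(x/2)|)$ on $E_n$. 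Since $E_n\supset\{x:C/(n+1)\le|x|\le\pi\}$ for an absolute constant $C$, integrating produces $\|\Phi_n\|_{L^1(\T)}\gtrsim\log(n+1)$, completing the argument.

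The substantive technical step is the one-dimensional pointwise estimate on $|\Phi_n|$, but it is routine via the closed form above. The conceptual heart of the argument is rather the Fourier computation in Step~2: the sharp cutoff $\widehat{F_{n_j}}(k_j)=0$ for $|k_j|>n_j$, combined with $R\subset P\subset\R_+^d$, eliminates every Fourier frequency of $D_P$ lying outside $R\cap\Z_+^d$, so that $D_P*F$ tensorizes exactly as a product of one-dimensional kernels and a single one-dimensional lower bound suffices.
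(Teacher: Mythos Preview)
Your argument is correct and takes a genuinely different route from the paper. The paper invokes the multidimensional Hardy inequality
\[
\sum_{\bm k\in\Z_+^d}\frac{|a_{\bm k}|}{\prod_j(k_j+1)}\;\lesssim\;\Bigl\|\sum_{\bm k}a_{\bm k}e^{i(\bm k,\bm x)}\Bigr\|_{L^1(\T^d)}
\]
(applied with $a_{\bm k}=\chi_{P\cap\Z_+^d}(\bm k)$), then drops to the sub-rectangle $R$ and sums $\prod_j(k_j+1)^{-1}$ directly. Your Fej\'er-mollification argument replaces this black-box inequality with the elementary chain $\|D_P*F\|_{L^1}\le\|D_P\|_{L^1}$, the exact tensorization $D_P*F=\prod_j\Phi_{n_j}$ (which hinges precisely on both inclusions $R\subset P\subset\R_+^d$, as you emphasize), and a hands-on one-dimensional lower bound. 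The paper's proof is shorter because Hardy's inequality does the heavy lifting; yours is more self-contained and makes transparent exactly how the geometry enters. One small remark: the classical Fej\'er kernel $F_n$ is guaranteed nonnegative (hence $\|F_n\|_{L^1}=1$) only for integer $n$, so strictly you should take $F_{\lfloor n_j\rfloor}$; since $\log(\lfloor n_j\rfloor+1)\asymp\log(n_j+1)$ for $n_j\ge1$ this costs nothing.
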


\begin{proof}
By using a multidimensional generalization of~Hardy's inequality (see~\cite[p.~69]{Ru})
\begin{equation}\label{Hardy1}
  \sum_{k_1=0}^{N_1}\dots \sum_{k_d=0}^{N_d}\frac{|a_{\bm{k}}|}{(k_1+1)\dots (k_d+1)}
\lesssim \bigg\Vert \sum_{k_1=0}^{N_1}\dots \sum_{k_d=0}^{N_d} a_{\bm{k}} e^{i(\bm{k},\,\bm{x})}\bigg\Vert_{L_1(\T^d)}
\end{equation}
and the induction argument, we get
\begin{equation*}
%\label{M3}
\begin{split}
    \bigg\Vert \sum_{\bm{k}\in P\cap \Z_+^d} e^{i(\bm{k},\,\bm{x})}\bigg\Vert_{L_1(\T^d)}&\gtrsim \sum_{\bm{k}\in P\cap \Z_+^d}\frac1{(k_1+1)(k_2+1)\dots(k_d+1)} \\
    &\gtrsim \sum_{k_1=0}^{\lfloor n_1 \rfloor}\sum_{k_2=0}^{\lfloor n_2\rfloor}\dots \sum_{k_d=0}^{\lfloor n_d\rfloor} \frac1{(k_1+1)(k_2+1)\dots(k_d+1)}\\
    &\gtrsim \prod_{j=1}^d\log (n_j+1).
\end{split}
\end{equation*}
The theorem is proved.
\end{proof}

%\textbf{{Examples.}}
Now, let us consider some examples of application of Theorem~\ref{cor1} and Theorem~\ref{thBel}.

The simplest example is the rectangle $R_{\bm n}=[0,n_1]\times\dots\times[0,n_d]$. One has (see~\eqref{Rectan+})
$$
\mathcal{L}(R_{\bm n})\asymp \prod_{j=1}^d\log(n_j+1).
$$

The problem becomes non-trivial for tetrahedra.

\begin{corollary}\label{corM}
Let $n_j\ge 1$, $j=1,\dots, d$, and
$$
\D_{\bm{n}}=\left\{\bm{\xi}\in \R_+^d\,:\,\sum_{j=1}^d\frac{\xi_j}{n_j}\le 1\right\}.
$$
Then
\begin{equation*}\label{cM1}
  \mathcal{L}(\D_{\bm n})\asymp \prod_{j=1}^d\log (n_j+1).
\end{equation*}
%Moreover, there exists $\bm{r}^d\in \N^d$ such that for any $n_j\ge r_j$, $j=1,\dots,d$, one has
%\begin{equation*}\label{bcM1}
%  \prod_{j=1}^d\log (n_j+1)\lesssim \mathcal{L}(\D_{\bm n}).
%\end{equation*}
\end{corollary}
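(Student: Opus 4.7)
The plan is to deduce both inequalities directly from Theorem~\ref{cor1} and Theorem~\ref{thBel}. Geometrically, $\Delta_{\bm n}$ is sandwiched between two axis-aligned coordinate boxes whose side lengths differ only by the dimensional factor $1/d$, so the two theorems give matching asymptotics.

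For the upper bound I would note that $\Delta_{\bm n}$ is itself a $d$-dimensional simplex, so its trivial triangulation has size $s=1$. The inclusion $\Delta_{\bm n}\subset[0,n_1]\times\dots\times[0,n_d]$ is immediate, since each vertex $n_j\bm{e}_j$ of $\Delta_{\bm n}$ lies in this box and $\Delta_{\bm n}$ is convex. Applying Theorem~\ref{cor1} with $s=1$ therefore yields
$$
\mathcal{L}(\Delta_{\bm n})\le C(d)\prod_{j=1}^d\log(n_j+1),
$$
with a constant depending only on $d$, which is exactly the desired upper bound.

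For the lower bound I would exhibit an inscribed coordinate box. Setting $m_j:=n_j/d$, any $\bm{\xi}\in[0,m_1]\times\dots\times[0,m_d]$ satisfies $\sum_{j=1}^d\xi_j/n_j\le d\cdot(1/d)=1$, so $[0,m_1]\times\dots\times[0,m_d]\subset\Delta_{\bm n}$. The only mildly delicate point is that Theorem~\ref{thBel} is stated with the normalisation $m_j\ge 1$, whereas here $m_j=n_j/d$ can be as small as $1/d$; this is harmless because the Hardy-type estimate~\eqref{Hardy1} used in the proof of Theorem~\ref{thBel} works unchanged for any $m_j\ge 0$ and gives
$$
\mathcal{L}(\Delta_{\bm n})\gtrsim\prod_{j=1}^d\log(n_j/d+1).
$$
The remaining routine step is to replace $\log(n_j/d+1)$ by $\log(n_j+1)$ up to a factor depending only on $d$; for $n_j\ge 1$ the ratio $\log(n_j+1)/\log(n_j/d+1)$ is continuous and bounded, as one sees by inspecting the endpoint regimes $n_j\asymp 1$ and $n_j\to\infty$. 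I do not anticipate any genuine obstacle: the corollary is essentially a direct specialisation of the two main theorems to the simplex case, the only content being the elementary geometric observation that the inscribed and circumscribed boxes of $\Delta_{\bm n}$ differ by a factor of $d$.
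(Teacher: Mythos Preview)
Your proposal is correct and follows exactly the paper's approach: the authors simply observe that $[0,n_1/d]\times\dots\times[0,n_d/d]\subset \Delta_{\bm n}\subset [0,n_1]\times\dots\times[0,n_d]$ and invoke Theorem~\ref{cor1} and Theorem~\ref{thBel}. Your extra care about the hypothesis $m_j\ge 1$ in Theorem~\ref{thBel} (and the routine comparison $\log(n_j/d+1)\asymp\log(n_j+1)$) is a point the paper glosses over, but handled correctly.
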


\begin{proof}
To prove the corollary, it is sufficient to note that
$$
[0,n_1/d]\times\dots\times[0,n_d/d]\subset \D_{\bm n}\subset [0,n_1]\times\dots\times[0,n_d]
$$
and use Theorem~\ref{cor1} and Theorem~\ref{thBel}.
\end{proof}

By analogy, we can prove the following result which can be applied  in multivariate polynomial interpolation on the Lissajous-Chebyshev nodes (see~\cite{DE}, see also~\cite{E} in the case $d=2$).

\begin{corollary}\label{thMQ}
Let $n_j\ge 1$, $j=1,\dots, d$, and
$$
T_{\bm{n}}=\left\{\bm{\xi}\in\R_+^{d}:\frac{\xi_i}{n_i}+\frac{\xi_j}{n_j}\leq 1,\quad \textrm{for}\quad i\neq j,\quad\,i,\,j=1,\dots,d\right\}.
$$
Then
\begin{equation}\label{M1QT}
  \mathcal{L}(T_{\bm n})\asymp \prod_{j=1}^d\log (n_j+1).
\end{equation}
%Moreover, there exists $\bm{r}^d\in \N^d$ such that for any $n_j\ge r_j$, $j=1,\dots,d$, one has
%\begin{equation*}\label{bcM1T}
%  \prod_{j=1}^d\log (n_j+1)\lesssim \mathcal{L}(T_{\bm n}).
%\end{equation*}
\end{corollary}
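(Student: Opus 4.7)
The plan is to mirror the proof of Corollary~\ref{corM}: sandwich $T_{\bm n}$ between two axis-aligned rectangles of comparable side lengths and invoke Theorem~\ref{cor1} for the upper bound together with Theorem~\ref{thBel} for the lower bound.

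First I would verify the inclusions
\begin{equation*}
[0,n_1/2]\times\dots\times [0,n_d/2]\subset T_{\bm n}\subset [0,n_1]\times\dots\times [0,n_d].
\end{equation*}
The right inclusion is immediate: for $\bm{\xi}\in T_{\bm n}$, fixing $j$ and choosing any $i\ne j$, the bound $\xi_i\ge 0$ combined with the pairwise constraint yields $\xi_j/n_j\le \xi_i/n_i+\xi_j/n_j\le 1$. The left inclusion is equally short: if $\xi_l\le n_l/2$ for every $l$, then each of the $\binom{d}{2}$ pairwise sums is at most $1/2+1/2=1$.

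For the upper bound I would apply Theorem~\ref{cor1} to the outer box. The key structural observation is that $T_{\bm n}$ is the image of the fixed polytope $\{\bm{\eta}\in\R_+^d:\eta_i+\eta_j\le 1\ \text{for all}\ i\ne j\}$ under the diagonal scaling $\eta_j\mapsto n_j\eta_j$, so the combinatorial type of $T_{\bm n}$, and hence the size $s$ of any triangulation, is bounded by a constant depending only on $d$. Theorem~\ref{cor1} then yields $\mathcal{L}(T_{\bm n})\le C(d)\prod_{j=1}^d\log(n_j+1)$.

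For the matching lower bound I would apply Theorem~\ref{thBel} to the enclosed box, together with the elementary estimate $\log(n_j/2+1)\asymp \log(n_j+1)$ valid for $n_j\ge 1$. Coordinates where $n_j<2$ contribute only bounded factors to the product and can be absorbed into the implicit constants (using the trivial bound $\mathcal{L}(T_{\bm n})\ge 1$). No step here is a genuine obstacle; the whole argument is essentially a one-line consequence of the two main theorems once the box-sandwich and the combinatorial-type observation have been spelled out.
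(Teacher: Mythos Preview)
Your proposal is correct and matches the paper's intended argument exactly: the paper does not spell out a proof for this corollary but simply says it is proved ``by analogy'' with Corollary~\ref{corM}, i.e., by sandwiching $T_{\bm n}$ between two axis-aligned boxes of comparable side lengths and invoking Theorems~\ref{cor1} and~\ref{thBel}. Your explicit choice of the inner box $[0,n_1/2]\times\dots\times[0,n_d/2]$ and the observation that the triangulation size of $T_{\bm n}$ depends only on $d$ are precisely the missing details.
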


\section{Estimates of the $L_p$ Lebesgue constant for convex polyhedra}\label{SecLp}

Let
$$
\mathcal{L}(W)_p:=\(\frac1{(2\pi)^d}\int_{\T^d}\bigg|\sum_{\bm{k}\in W\cap\Z^d} e^{i(\bm{k},\,\bm{x})}\bigg|^p {\rm d}{\bm x}\)^\frac1p
$$
be the $L_p$ Lebesgue constant for the set $W\subset\R^d$.

Above, we obtained the estimates of $\mathcal{L}(W)_p$ for convex polyhedra in the case $p=1$. It turns out that all these results can be transferred to the case $1<p<\infty$ after some minor changes.

In particular, in the following results, we improve and generalize the main results in~\cite{Ash} and~\cite{AC}. Everywhere below, $1<p<\infty$ and constants in "$\lesssim$" and "$\gtrsim$" depend only on $p$ and $d$.

\begin{theorem}\label{thMp}
  Let $P\subset \R^d$ be a bounded convex polyhedron such that $P\subset [0,n_1]\times\dots\times [0,n_d]$, $n_j\ge 1$, $j=1,\dots,d$, and let $s$ be size of the triangulation of~$P$. Then
%  \begin{equation}\label{T1}
%    \left\Vert \sum_{\bm{k}\in P_m\cap \Z_+^d}e^{i(\bm{k},\,\bm{x})}\right\Vert_{L_1(\T^d)}\lesssim m\prod_{j=1}^d\log (n_j+1).
%  \end{equation}
  \begin{equation}\label{T1p}
    \mathcal{L}(P)\le C(s,d,p)\prod_{j=1}^d(n_j+1)^{1-\frac1p}.
  \end{equation}
  Moreover, if $\min_{j=1,\dots, d} n_j\to \infty$ in~\eqref{T1p}, then $C(s,d,p)\le C(d,p)s$.
\end{theorem}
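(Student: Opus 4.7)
The plan is to run the proof of Theorem~\ref{cor1} verbatim with the $L_1$ norm replaced by the $L_p$ norm throughout. First, I would triangulate $P=\bigcup_{j=1}^{s} T_j$ and apply inclusion--exclusion together with Minkowski's inequality in $L_p$, reducing the estimate of $\mathcal{L}(P)_p$ to the $L_p$-Lebesgue constants of single tetrahedra $T\subset [0,n_1]\times\dots\times[0,n_d]$ and of the lower-dimensional intersections $T_{l_1}\cap\dots\cap T_{l_\nu}$. Next, Fourier--Motzkin elimination (as in Lemma~\ref{lexs}) writes $\sum_{\bm{k}\in T\cap \Z_+^d} e^{i(\bm{k},\bm{x})}$ as a sum of at most $C(d)$ Dirichlet-type kernels $D_{\bm{\tilde{M}}_{\nu,j}^{(d)}}$ of the form~\eqref{FFF} with $\deg_l D_{\bm{\tilde{M}}_{\nu,j}^{(d)}}\le n_l$. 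By Minkowski's inequality, $\mathcal{L}(T)_p\lesssim \sum_{\nu,j}\|D_{\bm{\tilde{M}}_{\nu,j}^{(d)}}\|_{L_p(\T^d)}$.

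The heart of the argument is an $L_p$ version of the Main Lemma (Lemma~\ref{thM}), namely
$$
\|D_{\bm{M}^{(d)}}\|_{L_p(\T^d)}\lesssim \prod_{l=1}^d (N_l+1)^{1-\frac1p},\qquad N_l=\deg_l D_{\bm{M}^{(d)}}.
$$
This is proved by induction on $d$ along the same inductive scheme as Lemma~\ref{thM}: decompose $D_{\bm{M}^{(d+1)}}=G_{\bm{M}^{(d+1)}}+F_{\bm{M}^{(d+1)}}$ via Lemma~\ref{leFG} and bound each piece by the inductive hypothesis. The one-dimensional input that needs adjustment is Lemma~\ref{le4}: one uses $\|S_t\|_{L_p(\T^1)}\lesssim (t+1)^{1-1/p}$, which follows from the same split at $|x|=1/(t+1)$, with $|S_t|\lesssim t+1$ on the small interval and $|S_t|\lesssim 1/|x|$ on the complement.

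The main technical obstacle is the $L_p$ analog of Lemma~\ref{leG}. The principal term $D_{\bm{M}^{(d)}}(\bm{x}^d)S_{n_{d+1}}(x_{d+1})$ is handled by Fubini and the one-dimensional $L_p$ estimate above. For the auxiliary terms $V_l$, the singular kernel $1/(e^{ix_{d+1}}-1)$ no longer produces a logarithmic factor, so a finer split than in the $L_1$ case is required. I would split at the threshold $\delta_\ast = 2/(|m_l^{(d)}| N_l)$ rather than $1/(Nm_l')$: on $|x_{d+1}|\ge \delta_\ast$, translation invariance of the $L_p$-norm combined with $\int_{\delta_\ast}^\pi dx/|x|^p$ yields a contribution $\lesssim (|m_l^{(d)}| N_l)^{1-1/p}\|D_{\bm{M}^{(d)}}\|_{L_p}$, while on $|x_{d+1}|<\delta_\ast$, the $L_p$-Bernstein inequality (which holds for all $1\le p\le\infty$) gives a matching bound once the singular factor is cancelled. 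The refined estimate $|m_l^{(d)}| N_l \le 2N$ from the proof of Lemma~\ref{leG} then yields $\|V_l\|_{L_p(\T^{d+1})}\lesssim (N+1)^{1-1/p}\|D_{\bm{M}^{(d)}}\|_{L_p(\T^d)}$, matching the principal term and closing the induction.

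The remaining auxiliary lemmas (Lemmas~\ref{leD}, \ref{leqq}, \ref{lenM}, and \ref{leFF}) pass to $L_p$ without substantive changes: one replaces $\log$ factors with $(\cdot)^{1-1/p}$ factors at every step, noting in particular that the series $\sum_s (2\pi)^s s^{1-1/p}/s!$ and $\sum_s (2\pi)^s 2^s/s!$ appearing in the $L_p$ analog of Lemma~\ref{leFF} still converge. Combining the $L_p$ Main Lemma with the triangulation reduction then yields~\eqref{T1p}; the improvement to $C(s,d,p)\le C(d,p)s$ when $\min_j n_j\to\infty$ follows as in Theorem~\ref{cor1}, since the contribution of the lower-dimensional intersections is of smaller order.
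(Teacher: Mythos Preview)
Your proposal is correct and follows the same approach as the paper, whose proof consists of the single instruction to repeat the $L_1$ argument with $\|S_t\|_{L_p}\lesssim (t+1)^{1-1/p}$ in place of~\eqref{Slog}. You actually go further than the paper by correctly noting that the split threshold in the proof of Lemma~\ref{leG} must be moved from $1/(Nm_l')$ to $\asymp 1/(|m_l^{(d)}|N_l)$ (the original threshold would produce a spurious extra power of $N$ in $L_p$); one small inaccuracy is that the $\log$ factors in Lemmas~\ref{leqq}--\ref{leFF} remain logarithms in $L_p$ rather than turning into powers, but since $\log(N+1)\lesssim (N+1)^{1-1/p}$ this is harmless.
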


\begin{proof}
Inequality~\eqref{T1p} can be proved repeating step by step the proof of main Lemma~\ref{thM} and other auxiliary lemmas. Here, we only note that instead of~(\ref{Slog}) one has to use the inequality
\begin{equation*}\label{Slogp}
  \Vert S_{t}\Vert_{L_p(\T^1)}\lesssim (t+1)^{1-\frac1p}.
\end{equation*}
\end{proof}

\begin{theorem}\label{corMp}
  Let $P$ be a bounded convex polyhedron in $\R^d$ such that $[0,n_1]\times\dots\times [0,n_d]\subset P\subset \R_+^d$ and let $n_j\ge 1$, $j=1,\dots,d$. Then
  \begin{equation}\label{eqBelp}
    \prod_{j=1}^d(n_j+1)^{1-\frac1p}\lesssim\mathcal{L}(P)_p.
  \end{equation}
\end{theorem}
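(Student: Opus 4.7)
The plan is to prove the lower bound by a duality (Parseval-plus-Hölder) argument, testing $D_P$ against the Dirichlet kernel $D_R$ of the inscribed rectangle $R=[0,n_1]\times\cdots\times[0,n_d]$. The reason the $L_1$ proof (via Hardy's inequality) does not transfer directly is that the multi-dimensional Hardy inequality does not give the sharp exponent in $L_p$; however, since $1<p<\infty$ we are in the reflexive range and duality is clean.

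First I would pair the kernels. By Parseval's identity, since $R\subset P$,
\begin{equation*}
\frac{1}{(2\pi)^d}\int_{\T^d}D_P(\bm x)\overline{D_R(\bm x)}\,{\rm d}\bm x
=\sum_{\bm k\in\Z^d}\chi_{P\cap\Z^d}(\bm k)\,\chi_{R\cap\Z^d}(\bm k)=|R\cap\Z^d|\gtrsim\prod_{j=1}^d(n_j+1),
\end{equation*}
where the last inequality uses $n_j\ge 1$ so that $\lfloor n_j\rfloor+1\ge(n_j+1)/2$. Next, by Hölder's inequality applied with the conjugate exponent $p'=p/(p-1)\in(1,\infty)$,
\begin{equation*}
\left|\frac{1}{(2\pi)^d}\int_{\T^d}D_P\overline{D_R}\,{\rm d}\bm x\right|\le\mathcal{L}(P)_p\cdot\mathcal{L}(R)_{p'}.
\end{equation*}

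Third, I would control $\mathcal{L}(R)_{p'}$ by the tensor-product structure of $D_R$. Since $D_R(\bm x)=\prod_{j=1}^d\sum_{k=0}^{\lfloor n_j\rfloor}e^{ikx_j}$, Fubini gives $\mathcal{L}(R)_{p'}=\prod_{j=1}^d\mathcal{L}([0,n_j])_{p'}$, and the standard one-dimensional bound $\mathcal{L}([0,n])_{p'}\asymp(n+1)^{1-1/p'}=(n+1)^{1/p}$ (proven via the split $|x|\le 1/(n+1)$ versus $1/(n+1)\le|x|\le\pi$, exactly as in Lemma~\ref{le4} but with $L_{p'}$ in place of $L_1$) yields $\mathcal{L}(R)_{p'}\asymp\prod_{j=1}^d(n_j+1)^{1/p}$. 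Combining the three ingredients,
\begin{equation*}
\prod_{j=1}^d(n_j+1)\lesssim\mathcal{L}(P)_p\cdot\prod_{j=1}^d(n_j+1)^{1/p},
\end{equation*}
which rearranges to~\eqref{eqBelp}.

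The hard part is essentially nonexistent: the whole argument depends only on $R\subset P$ (no upper bound on the diameter of $P$ is needed, because the test function $D_R$ is supported on the rectangle, and the diameter of $P$ only enters through $|P\cap\Z^d|\ge|R\cap\Z^d|$ in the Parseval step). The only subtlety is that the endpoint $p=1$ is excluded, which is visible from the fact that the one-dimensional estimate would degenerate into a $\log n$ at $p'=\infty$; this is consistent with the qualitatively different (logarithmic) lower bound in Theorem~\ref{thBel}.
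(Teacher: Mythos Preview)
Your proof is correct and takes a genuinely different route from the paper's. The paper invokes a multivariable $L_p$ Hardy--Littlewood inequality due to Dyachenko,
\[
\Bigl(\sum_{\bm k}\frac{|a_{\bm k}|^p}{\prod_j(k_j+1)^{2-p}}\Bigr)^{1/p}\lesssim\Bigl\|\sum_{\bm k}a_{\bm k}e^{i(\bm k,\bm x)}\Bigr\|_{L_p(\T^d)},
\]
applied with $a_{\bm k}=\chi_{P\cap\Z_+^d}(\bm k)$, and then restricts the left-hand sum to the inscribed rectangle. Your argument instead pairs $D_P$ with $D_R$ via Parseval and H\"older, reducing everything to the elementary one-dimensional bound $\Vert D_{[0,n]}\Vert_{L_{p'}}\asymp(n+1)^{1/p}$. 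The duality approach is more self-contained: it uses only the tensor structure of $D_R$ and the standard Dirichlet-kernel estimate, and it sidesteps the lexicographic-monotonicity hypothesis in Dyachenko's inequality (that hypothesis requires $a_{\bm k}\le a_{\bm m}$ whenever $\bm k\ge\bm m$ componentwise, which is not automatic for $\chi_{P\cap\Z_+^d}$ for an arbitrary convex $P\subset\R_+^d$). The paper's route, on the other hand, keeps the $L_p$ proof formally parallel to the $L_1$ proof of Theorem~\ref{thBel}. One minor correction to your opening remark: it is not that a Hardy-type inequality cannot deliver the sharp exponent in $L_p$---the Dyachenko version does---but simply that your duality argument is cleaner and requires less external input.
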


\begin{proof}
The proof of estimate~\eqref{eqBelp} is almost the same as the proof of~\eqref{eqBel}. The only difference is that  instead of~\eqref{Hardy1} we have to use the following
$L_p$ Hardy-Littlewood inequality (see~\cite{D})
$$
\left(\sum_{k_1=0}^{N_1}\dots \sum_{k_d=0}^{N_d}\frac{|a_{\bm{k}}|^p}{((k_1+1)\dots (k_d+1))^{2-p}} \right)^\frac1p
\lesssim \bigg\Vert \sum_{k_1=0}^{N_1}\dots \sum_{k_d=0}^{N_d} a_{\bm{k}} e^{i(\bm{k},\,\bm{x})}\bigg\Vert_{L_p(\T^d)},
$$
where the coefficients $\{a_{\bm{k}}\}_{\bm{k}\in\Z_+^d}$ satisfy the condition $a_{\bm{k}}\le a_{\bm{m}}$ if $k_l\ge m_l$ for all $l=1,\dots,d$.
\end{proof}

\end{document}